\documentclass[11pt,twoside]{amsart}
\usepackage{amsmath}
\usepackage{amssymb}
\usepackage{mathrsfs}

\newtheorem{theo}{Theorem}

\newtheorem{lem}{Lemma}
\newtheorem{prop}{Proposition}

\theoremstyle{definition}
\newtheorem{defn}{Definition}

\theoremstyle{remark}
\newtheorem{rem}{\bf Remark\/}

\numberwithin{equation}{section}

\def\C{{\mathbb{C}}}
\def\R{{\mathbb{R}}}
\def\N{{\mathbb{N}}}
\def\1{{\mathchoice {\rm 1\mskip-4mu l} {\rm 1\mskip-4mu l}{\rm 1\mskip-4.5mu l} {\rm 1\mskip-5mu l}}}
\newcommand{\ds}{\displaystyle}
\newcommand{\w}{\wedge}
\newcommand{\p}{\partial}


\title[Existence of the directional tangent cone]{Existence of the directional tangent cone to a  positive current}

\author[H. Hawari]{Haithem Hawari}
\email{haithem.hawari@yahoo.fr}
\author[J. Hbil]{Jawhar Hbil}
\email{jawhar\_x@hotmail.fr}
\author[N. Ghiloufi]{Noureddine Ghiloufi}
\email{noureddine.ghiloufi@fsg.rnu.tn}
\address{Department of Mathematics\\ Faculty of sciences of Gab\`es \\ University of Gab\`es \\ Erriadh city 6072 Gab\`es Tunisia.}
\subjclass[2000]{32U25; 32U40}
\keywords{Lelong number, positive plurisubharmonic current, Tangent cone.}
\begin{document}
\maketitle

\begin{abstract}
    In this paper, we start by proving the existence of  the strict transform of a positive current $T$ as soon as its $j^{th}$ currents, $T_j$,  are plurisubharmonics or plurisuperharmonics. Then, with a suitable condition on $T_j$, we show the  existence of the directional tangent cone to $T$. In the particular case, when $T$ is closed, we give a second  condition independent to the previous one.

    \textbf{Existence du c\^one tangent   directionnel \`a un courant positif.}\\
    \textsc{R\'esum\'e.} Dans la premi\`ere partie de cet article, on montre l'existence du rel\'evement d'un courant positif $T$, par un éclatement de centre lisse, d\`es que ses $j^{\grave eme }$ courants $T_j$ soient  plurisousharmoniques ou plurisurharmoniques. Ensuite, avec une condition suppl\'ementaire sur les courants $T_j$, on prouve l'existence du c\^one tangent directionnel \`a $T$. Dans le cas particulier o\`u $T$ est ferm\'e, une deuxi\`eme condition ind\'ependante de la premi\`ere  sera donn\'ee.
\end{abstract}
\section{Introduction}
    We are interested to the problem of the existence of the strict transforms  of  positive currents and the problem of the existence of the directional tangent cones to positive currents. These two problems are closely related, especially in the case of  positive closed currents.

    For the second problem, it is well known that the tangent cone to the current of integration, over an analytic set, exists and coincides with the current of integration over the geometric tangent cone. This result had been proved by Thie in 1967 and King in 1971 . In the other side, Kiselman \cite {Ki} in 1988, gave an example of a positive closed  current of bidegree $(1,1)$ which   has not a tangent cone. It is therefore natural to find  a sufficient condition for its existence. Based on the construction of Kiselman which use essentially the projective mass of the current, Blel, Demailly and Mouzali \cite{Bl-De-Mo} have given two  independent conditions where each one ensure the existence of the tangent cone to a positive closed current. Recently, Haggui \cite{Ha} has shown that one of these conditions remains sufficient even for the case of positive plurisubharmonic currents. This result has been found and generalized by Ghiloufi and Dabbek \cite{Gh-Da} in the case of a positive plurisubharmonic (psh) or plurisuperharmonic (prh) current.\\

    For the first problem, Giret \cite{Gi} has given, in 1998, some estimates of the projective mass of a positive closed current which allows him to give a sufficient condition for  the existence of a strict transform of a positive current. Using Raby's and Babouche's works, \cite{Ra,Ba} on the problem of restriction of  a positive closed current on hypersurfaces, Giret gave a link between the existence of directional tangent cone and the strict transform of this current.\\

    The basic purpose  of this work is to study the existence of the directional tangent cone and the strict transform of positive psh or prh currents.\\
    In the hole of this paper, we consider $n,\ m,\ p\in\N$ such that $0<p<n$. We use $(z,t)$ to indicate an element of $\C^N:=\C^n\times\C^m$ containing 0. Let $\Omega:=\Omega_1\times\Omega_2$ be an open subset of $\C^N$ and $B$ be an open subset of $\Omega_2$.  We assume that there exists $R>0$ such that $\{z\in\C^n;\ |z|<R\}\times B$ is relatively compact in $\Omega$. For $0<r<R$ and $r_1<r_2<R$, we set: $\Bbb B_n(r)=\{z\in\Omega_1;\ |z|<r\}$, $\Bbb B_n(r_1,r_2)=\{z\in\Omega_1;\ r_1\leq|z|<r_2\}$ and $\Bbb B_m(r)=\{t\in\Omega_2;\ |t|<r\}$.\\
    To simplify the notation, we set $\beta_z:=dd^c|z|^2$, $\beta_t:=dd^c|t|^2$ and
    $\alpha_z:=dd^c \log|z|^2$.\\

    Let $T$ be a positive current of bidegree $(p,p)$ on $\Omega$. The  directional Lelong number of $T$ with respect to $B$ is defined, when it exists, as the limit, $\nu(T,B):=\lim_{r\to 0^+}\nu(T,B,r)$ where $\nu(T,B,.)$ is the function defined by
        $$\nu(T,B,r):=\frac{1}{r^{2(n-p)}}\int_{\Bbb B_n(r)\times B} T\w\beta_z^{n-p}\w\beta_t^m.$$
    In the particular case $m=0$, we omit $B$ in previous notation and to make a distinction, we note $\nu_T(r),\ \nu_T(0)$ to indication respectively classical projective mass  and Lelong number of $T$ at 0.\\
The following lemma will be useful:
\begin{lem}\label{lem1}( Lelong-Jensen formula) (See \cite{De})
            Let  $S$ be a positive psh or prh  current of bidimension $(q,q)$ on the ball  $\Bbb B_n(R)$ of  $\C^n$ with $0<q<n$. Then for any $0<r_1<r_2<R$,
            $$\begin{array}{l}
                    \nu_S(r_2)-\nu_S(r_1)\\
                    := \ds \frac1{r_2^{2q}}\int_{\Bbb B_n(r_2)} S\w\beta_z^q -\frac1{r_1^{2q}} \int_{\Bbb B_n(r_1)}S\w\beta_z^q\\
                    = \ds\int_{\Bbb B_n(r_1,r_2)}S\w\alpha_z^q +\ds \int_{r_1}^{r_2}\left(\frac1{s^{2q}} -\frac1{r_2^{2q}} \right)sds\int_{\Bbb B_n(s)}dd^cS\w\beta_z^{q-1}\\
                    \hfill \ds +\left(\frac1{r_1^{2q}}-\frac1{r_2^{2q}}\right)\int_0^{r_1}sds\int_{\Bbb B_n(s)} dd^cS\w \beta_z^{q-1} .
            \end{array}$$
        \end{lem}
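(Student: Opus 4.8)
The plan is to derive the formula as a refinement of Lelong's classical monotonicity formula, the extra terms being exactly the contribution of $dd^cS$, which is a current of order $0$ with a fixed sign since $S$ is psh or prh. I work on $\C^n\setminus\{0\}$ with $\rho:=|z|^2$ and $\ell:=\log|z|^2$, so that $\beta_z=dd^c\rho$ and $\alpha_z=dd^c\ell$. A direct computation gives the elementary relation $\beta_z=\rho\,\alpha_z+\rho\,d\ell\w d^c\ell$, and since $(d\ell\w d^c\ell)^2=0$ this yields $\beta_z^q=\rho^q\bigl(\alpha_z^q+q\,d\ell\w d^c\ell\w\alpha_z^{q-1}\bigr)$. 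Equivalently, because $\beta_z^{q-1}$ and $\alpha_z^{q-1}$ are closed one has the exact representations $\beta_z^q=dd^c(\rho\,\beta_z^{q-1})$ and $\alpha_z^q=dd^c(\ell\,\alpha_z^{q-1})$ on $\C^n\setminus\{0\}$. These identities are what let me trade $\beta_z^q$ for $\alpha_z^q$ at the cost of a $dd^cS$ term.

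First I would fix $0<\varepsilon<r_1<r_2<R$ and integrate $S\w\beta_z^q$ over the annulus $\Bbb B_n(\varepsilon,r_2)$ against the exact form $dd^c(\rho\,\beta_z^{q-1})$. Applying Stokes' formula twice, to move $d$ and then $d^c$ onto $S$, expresses $\int S\w\beta_z^q$ as the sum of the interior term $\int \rho\,dd^cS\w\beta_z^{q-1}$ and boundary integrals over the two spheres $\{\rho=\varepsilon^2\}$ and $\{\rho=r_2^2\}$; the parallel computation with $\ell$ in place of $\rho$ produces $\int S\w\alpha_z^q$ together with its own boundary contributions. Since $\rho$ and $\ell$ are constant on each sphere, these boundary terms reduce to multiples of slice integrals of the form $\int_{\partial\Bbb B_n(s)} S\w d^c(\cdot)\w\beta_z^{q-1}$, and subtracting the two identities isolates $\nu_S(r_2)-\nu_S(\varepsilon)$ on one side and $\int_{\Bbb B_n(\varepsilon,r_2)}S\w\alpha_z^q$ plus a radial integral of the $dd^cS$ masses on the other.

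To convert the sphere integrals into the stated weighted radial integrals I would introduce the monotone function $M(s):=\int_{\Bbb B_n(s)}dd^cS\w\beta_z^{q-1}$ and use the coarea formula together with Fubini's theorem to interchange the order of integration. The weights $\bigl(\tfrac1{s^{2q}}-\tfrac1{r_2^{2q}}\bigr)s\,ds$ then arise as the antiderivative of the kernel $s^{-2q-1}$ coming from differentiating the normalization $r^{-2q}$, exactly as in Demailly's Lelong--Jensen formula \cite{De}. Finally I would let $\varepsilon\to0^+$: the sign of $dd^cS$ makes $M$ monotone and ensures $\int_0^{r_1}s\,ds\,M(s)$ converges, while the inner-sphere term at $\{\rho=\varepsilon^2\}$ passes to the limit and generates precisely the last summand with coefficient $\bigl(\tfrac1{r_1^{2q}}-\tfrac1{r_2^{2q}}\bigr)$, the constant value at $s=r_1$ of the annular weight.

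The main obstacle is the rigorous justification of the integration by parts near the origin, where $\ell$ and $\alpha_z$ are singular and where the slices of $S$ by spheres must be controlled. This is exactly where the psh/prh hypothesis is essential: it makes $dd^cS$ a signed measure-coefficient current, guarantees local finiteness of the masses $M(s)$ and of $S\w\alpha_z^q$, and lets me bound the boundary terms on $\{\rho=\varepsilon^2\}$ uniformly, so that the limit $\varepsilon\to0^+$ exists and yields the claimed pole contribution. Checking that the regularization is clean, i.e. that no spurious term survives in this limit, is the delicate point; everything else is bookkeeping with Stokes' theorem and Fubini.
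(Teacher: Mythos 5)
The paper never proves this lemma: it is stated with the citation ``(See \cite{De})'' and used as a black box, so your attempt can only be compared with the standard derivation in Demailly's book and in \cite{Gh,Gh-Da}. Your toolbox is the right one for that derivation --- the identities $\beta_z=\rho\,\alpha_z+\rho\,d\ell\w d^c\ell$, $\beta_z^q=dd^c(\rho\,\beta_z^{q-1})$, $\alpha_z^q=dd^c(\ell\,\alpha_z^{q-1})$ are all correct, as is the idea of moving $d$ and $d^c$ onto $S$ by Stokes, using constancy of $\rho$ and $\ell$ on spheres, smoothing $S$ to justify the slicing, and invoking the sign of $dd^cS$. But as organized, the plan has two genuine gaps. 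First, an architectural one: you run the whole computation on the annulus $\Bbb B_n(\varepsilon,r_2)$ with $\varepsilon<r_1$ and claim that the term $\bigl(\tfrac1{r_1^{2q}}-\tfrac1{r_2^{2q}}\bigr)\int_0^{r_1}s\,ds\int_{\Bbb B_n(s)}dd^cS\w\beta_z^{q-1}$ is ``generated'' by the limit $\varepsilon\to0^+$ of the inner-sphere contribution. This cannot work: $r_1$ never enters your computation, so no $r_1$-dependent quantity can come out of it; moreover this term (and the weight $\tfrac1{s^{2q}}-\tfrac1{r_2^{2q}}$ on $[r_1,r_2]$) involves the mass of $dd^cS$ on full balls reaching down to the origin, which integrals over $\Bbb B_n(\varepsilon,r_2)$ and its two boundary spheres can never produce. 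The correct organization needs no excision and no limit at all: one performs the $\rho$-integration by parts on \emph{full} balls (legitimate, since $\rho$ and $\beta_z$ are smooth at $0$ and $dd^cS$ has locally finite mass by its fixed sign), getting $\int_{\partial\Bbb B_n(s)}S\w d^c\rho\w\beta_z^{q-1}=\int_{\Bbb B_n(s)}S\w\beta_z^q+\int_{\Bbb B_n(s)}(s^2-|z|^2)\,dd^cS\w\beta_z^{q-1}$, and writes $s^2-|z|^2=\int_{|z|}^s2u\,du$ so that Fubini converts the last integral into $\int_0^s 2u\,du\int_{\Bbb B_n(u)}dd^cS\w\beta_z^{q-1}$; evaluating at $s=r_1$ and $s=r_2$ is the sole source of both tail terms, while only the $\ell$-computation is done on $\Bbb B_n(r_1,r_2)$, which already avoids the origin.

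Second, the $dd^cS$-terms your subtraction produces are not the ones in the statement. The $\ell$-side of your parallel computation yields the interior term $\int\ell\,dd^cS\w\alpha_z^{q-1}$ and boundary terms $\log(s^2)\int_{\partial\Bbb B_n(s)}d^cS\w\alpha_z^{q-1}$, all carrying $\alpha_z^{q-1}$, whereas the formula involves only the masses $\int_{\Bbb B_n(s)}dd^cS\w\beta_z^{q-1}$; subtracting the $\rho$-identity does not remove this mismatch. The missing step is to use that the pullback of $\alpha_z$ to $\partial\Bbb B_n(s)$ equals that of $\beta_z/s^2$ (the $d\rho\w d^c\rho$ part dies on the sphere), so that for regularized $S$ one has $\int_{\partial\Bbb B_n(s)}d^cS\w\alpha_z^{q-1}=s^{-2(q-1)}\int_{\partial\Bbb B_n(s)}d^cS\w\beta_z^{q-1}=s^{-2(q-1)}\int_{\Bbb B_n(s)}dd^cS\w\beta_z^{q-1}$, the last equality by Stokes since $d(d^cS\w\beta_z^{q-1})=dd^cS\w\beta_z^{q-1}$; one then absorbs the $\log$ weight by a radial Stieltjes integration by parts. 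With these two repairs (plus the smoothing $S\ast\chi_\delta$ and passage to the limit for almost every radius, which you mention but do not carry out), your outline does close up into the standard proof of the lemma; without them, the stated right-hand side cannot be reached from the identities you write down.
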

According to Lemma \ref{lem1}, if $S$ is a positive psh current then $\nu_S$  is a non-negative increasing  function on $]0,R[$,  therefore the Lelong number $\nu_S(0):=\lim_{r\to0^+} \nu_S(r)$ of $S$ at $0$ exists.\\
For a positive prh current one has the following result:
\begin{prop}\label{prop1}(See \cite{Gh})
            Let  $S$ be a positive prh current of  bidimension $(q,q)$  on the ball $\Bbb B_n(R)$ of $\C^n$, $0<q<n$. We assume that $S$ satisfies the condition $(C_0)$ given by
            $$(C_0):\qquad\int_0^{r_0}\frac{\nu_{dd^cS}(s)}sds>-\infty$$ for $0<r_0< R$. Then, the Lelong number $\nu_S(0)$ of $S$ at $0$ exists.
        \end{prop}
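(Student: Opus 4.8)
The plan is to extract from the Lelong--Jensen formula of Lemma \ref{lem1} a monotone quantity whose behaviour near $0$ is entirely governed by the hypothesis $(C_0)$. Set
$$\mu(s):=\int_{\Bbb B_n(s)}dd^cS\w\beta_z^{q-1}=s^{2(q-1)}\nu_{dd^cS}(s),$$
so that, $S$ being prh, $dd^cS\leq0$ and hence $\mu(s)\leq0$. First I would regroup the two last terms of the formula. Merging the integral over $]r_1,r_2[$ with the one over $]0,r_1[$ and using $s\,\mu(s)=s^{2q-1}\nu_{dd^cS}(s)$, a direct computation gives
$$\int_{r_1}^{r_2}\Big(\frac1{s^{2q}}-\frac1{r_2^{2q}}\Big)s\,\mu(s)\,ds+\Big(\frac1{r_1^{2q}}-\frac1{r_2^{2q}}\Big)\int_0^{r_1}s\,\mu(s)\,ds=\int_{r_1}^{r_2}\frac{\nu_{dd^cS}(s)}s\,ds-F(r_2)+F(r_1),$$
where I have introduced
$$F(r):=\frac1{r^{2q}}\int_0^r s^{2q-1}\nu_{dd^cS}(s)\,ds.$$

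Next I would introduce the auxiliary function
$$\Phi(r):=\nu_S(r)+F(r)-\Psi(r),\qquad \Psi(r):=\int_0^r\frac{\nu_{dd^cS}(s)}s\,ds,$$
which is well defined on $]0,r_0[$ precisely because $(C_0)$ makes $\Psi$ finite. Substituting the regrouping above into the Lelong--Jensen formula, the contributions of $F$ and $\Psi$ reassemble into $\Phi$, and one is left with
$$\Phi(r_2)-\Phi(r_1)=\int_{\Bbb B_n(r_1,r_2)}S\w\alpha_z^q.$$
Since $\alpha_z\geq0$ and $S\geq0$, the right-hand side is non-negative, so $\Phi$ is non-decreasing on $]0,r_0[$; consequently $\lim_{r\to0^+}\Phi(r)$ exists in $[-\infty,+\infty[$.

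It then remains to control $F$ and $\Psi$ as $r\to0^+$. Since the integral defining $\Psi$ converges by $(C_0)$, its tail tends to $0$, so $\Psi(r)\to0$. For $F$, the decisive estimate is that, for $s\leq r$, one has $s^{2q-1}/r^{2q}=(s/r)^{2q}/s\leq 1/s$, whence
$$|F(r)|\leq\int_0^r\frac{|\nu_{dd^cS}(s)|}s\,ds,$$
and the right-hand side tends to $0$ as $r\to0^+$, again by $(C_0)$. This estimate is the main point of the argument, being exactly where the convergence hypothesis is used; everything else is the regrouping of the formula. Finally, writing $\nu_S(r)=\Phi(r)-F(r)+\Psi(r)$ and letting $r\to0^+$ shows that $\nu_S(0)=\lim_{r\to0^+}\Phi(r)$ exists, and since $\nu_S(r)\geq0$ this limit is necessarily finite and non-negative.
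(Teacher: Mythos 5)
Your proof is correct and is essentially the paper's own argument: your auxiliary function $\Phi(r)=\nu_S(r)+F(r)-\Psi(r)$ coincides exactly with the paper's $\Lambda_S(r)=\nu_S(r)+\int_0^r\bigl(\frac{s^{2q}}{r^{2q}}-1\bigr)\frac{\nu_{dd^cS}(s)}{s}ds$, since $F(r)-\Psi(r)$ is precisely that integral. You merely spell out the details the paper leaves implicit (the regrouping that proves monotonicity via Lelong--Jensen, and the bound $|F(r)|\leq\int_0^r|\nu_{dd^cS}(s)|s^{-1}ds$ showing the correction terms vanish under $(C_0)$).
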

\begin{proof}
            For $0<r<R$, we set
            $$\Lambda_S(r)=\nu_S(r)+\int_0^r\left(\frac{s^{2q}}{r^{2q}}-1\right)\frac{\nu_{dd^cS}(s)}{s}ds.$$
           The function $\Lambda_S$ is well defined and non-negative on $]0,R[$. In addition, by the Lelong-Jensen formula, it's easy to show that this function is increasing on $]0,R[$, hence the existence of the limit $\ell:=\lim_{r\to0^+} \Lambda_S(r)$. Condition $(C_0)$ and the fact that $(s^q/r^q-1)$ is uniformly bounded give $$\ds\lim_{r\to0^+}\int_0^r\left(\frac{s^{2q}}{r^{2q}}-1\right)\frac{\nu_{dd^cS}(s)}{s}ds=0.$$
            So, $\ell=\lim_{r\to0^+}\Lambda_S(r)=\lim_{r\to0^+}\nu_S(r)=\nu_S(0)$.
        \end{proof}

    We end this part by recalling the Demailly's Inequality  which will be useful in the proofs of various results in this paper:\\
    \emph{If $$S=2^{-q}i^{q^2}\sum_{|I|=|J|=q}S_{I,J}dw_I\w d\overline{w}_J$$ is a positive $(q,q)-$current, then for all $(\lambda_1,...,\lambda_n)\in]0,+\infty[^n$ we have
    $$\lambda_I\lambda_J|S_{I,J}|\leq 2^q\sum_{M\in\mathscr M_{I,J}}\lambda_MS_{M,M}$$
    where $\lambda_I=\lambda_{i_1}...\lambda_{i_q}$ when $I=(i_1,...,i_q)$ and the sum is taken over the set of $q-$index  $\mathscr M_{I,J}=\{M;\ |M|=q,\ I\cap J\subset M\subset I\cup J\}$.}\\

\section{Strict transform of a positive current}
    Let $\C^N[\{0\}\times\C^m]:=\{(z,\mathfrak{g},t)\in\C^n\times\mathbb{P}^{n-1}\times\C^m;\ z\in \mathfrak{g}\}$ be  the blow-up  of $\C^N=\C^n\times\C^m$ with center $L:=\{0\}\times\C^m$ and   $\pi$ : $\C^N[\{0\}\times\C^m]\longrightarrow\C^N$  be the canonical projection defined by $\pi(z,\mathfrak{g},t)=(z,t)$. Let $T$  be a positive current on $\C^N$. We try to find a positive current $\widehat{T}$ on $\C^N[\{0\}\times\C^m]$ such that $\pi_\star \widehat{T}=T$; a such current, if it exists, will be called the strict transform of $T$. If $m=0$, it was shown (See \cite{Gh-Da}) that this strict transform exists in the case of positive psh  currents or positive prh currents satisfying Condition $(C_0)$. That's why we will consider the case  $m\not=0$. In this case Kiselman \cite{Ki} had shown that this strict transform may not exists. We try to find some sufficient conditions on $T$  to ensure the existence of its strict transform. We define the current $T_j$ as $T_j:=\int_{\Bbb B_m(\sigma)} T\w \beta_t^{m-j}$ for any integer $0\leq j\leq m$.   The current $T_j$ is positive  of bidimension $(n-p+j,n-p+j)$ on $\C^n$ where $(p,p)$ is the bidegree of $T$. We assume that the currents $T_j$ are psh or prh satisfying Condition $(C_0)$, so the Lelong number $\nu_{T_j}(0)$ of $T_j$ at 0 exists; this number will be called the $j^{th}$ Lelong number  of $T$ at 0.\\
     \begin{theo}
        Let $T$ be as above. Assume that for $j\in J_1$ (resp. $j\in J_2$), the current  $T_j:=\int_{\Bbb B_m(\sigma)}T\w\beta_t^j$ is plurisubharmonic (resp. plurisuperharmonic satisfying Condition $(C_0)$) for every $\sigma>0$ where $J_1\cup J_2=\{0,...,m\}$. Then the strict transform of  $T$ exists. furthermore, there exists a constant $ c> 0 $ such that for every $0<r<R$, one has
        $$\begin{array}{l}
            ||\pi^\star(T_{|\Omega\smallsetminus L})||(\pi^{-1}[(\Bbb B_n(r)\smallsetminus \{0\})\times\Bbb B_m(\sigma)])\\
            \leq \ds c\sum_{j=0}^{m}\left(|\nu_{T_j}(r)-\nu_{T_j}(0)|+C_1(r)\nu_{T_j}(r)\right)-C_2(r) \sum_{j\in J_2}\ds\int_0^r\frac{\nu_{dd^cT_{m-j}}(s)}sds.
        \end{array}$$
        where     $C_1(r)= \sum_{k=1}^{N-p}r^{2k}$       and       $C_2(r)=\sum_{k=0}^{N-p} r^{2k}$
      \end{theo}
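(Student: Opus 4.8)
The plan is to obtain $\widehat T$ by showing that the pullback $\pi^\star(T_{|\Omega\smallsetminus L})$, which is already a well-defined positive current on the blow-up away from the exceptional divisor $E=\pi^{-1}(L)$, has locally finite mass in a neighborhood of $E$. Once this is in hand, the simple (trivial) extension of $\pi^\star(T_{|\Omega\smallsetminus L})$ across $E$ produces a positive current $\widehat T$ on $\C^N[\{0\}\times\C^m]$, and since $\pi$ is a proper biholomorphism off the center $L$ one gets $\pi_\star\widehat T=T$. Thus the whole theorem reduces to the displayed mass estimate: the finiteness of its right-hand side is exactly what guarantees the extension, so the existence statement and the inequality are proved together.

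First I would fix the standard affine charts of $\C^N[\{0\}\times\C^m]$ and equip the blow-up with a natural K\"ahler form $\widehat\omega=\pi^\star(\beta_z+\beta_t)+c\,\Theta$, where $\Theta$ is the curvature form carried by the exceptional divisor; the mass of a $(p,p)$-current is measured against $\widehat\omega^{N-p}$. Expanding $\widehat\omega^{N-p}$ and pushing the computation down to the base through $\pi$, the essential point is the identity relating $\pi^\star\alpha_z$ (which extends smoothly across $E$, with $\alpha_z=dd^c\log|z|^2$) to $\pi^\star\beta_z$ and the exceptional directions. After this expansion, $\|\pi^\star(T_{|\Omega\smallsetminus L})\|$ over $\pi^{-1}[(\Bbb B_n(r)\smallsetminus\{0\})\times\Bbb B_m(\sigma)]$ is dominated by a finite sum of trace-type integrals $\ds\int_{(\Bbb B_n(r)\smallsetminus\{0\})\times\Bbb B_m(\sigma)}T\w\beta_z^a\w\beta_t^b\w\alpha_z^c$ with $a+b+c=N-p$. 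To control the off-diagonal coefficients of $T$ arising in this expansion I would invoke Demailly's inequality, which reduces the mass exactly to such integrals.

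Next I would integrate out the $t$-variables, using that $\ds\int_{\Bbb B_m(\sigma)}(\,\cdot\,)\w\beta_t^b$ turns $T$ into the current $T_b$, so each integral becomes $\ds\int_{\Bbb B_n(r)\smallsetminus\{0\}}T_b\w\beta_z^a\w\alpha_z^c$. The terms carrying the top power of $\alpha_z$ are handled by the Lelong--Jensen formula of Lemma \ref{lem1}: letting $r_1\to0^+$ there identifies $\ds\int_{\Bbb B_n(r)\smallsetminus\{0\}}T_b\w\alpha_z^{q}$ with $\nu_{T_b}(r)-\nu_{T_b}(0)$ up to a correction built from $dd^cT_b$, which is precisely the origin of the $\ds\int_0^r\frac{\nu_{dd^cT_{m-j}}(s)}{s}ds$ contribution. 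The remaining mixed terms, carrying powers $\beta_z^a$ with $a\geq1$, are bounded by $\nu_{T_b}(r)$ times the appropriate $r^{2k}$, which assembles into the factor $C_1(r)=\sum_{k=1}^{N-p}r^{2k}$; collecting all the pieces produces the stated constants $C_1(r)$ and $C_2(r)=\sum_{k=0}^{N-p}r^{2k}$.

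The main obstacle is the prh case $j\in J_2$, where $\nu_{T_j}$ need not be monotone and the naive bounds lose their sign. Here I would replace $\nu_{T_j}$ by the increasing auxiliary function $\Lambda_{T_j}$ of Proposition \ref{prop1} and use Condition $(C_0)$ to keep the corrective $dd^c$-integrals finite and of controlled sign; this is exactly what introduces the term $-C_2(r)\sum_{j\in J_2}\ds\int_0^r\frac{\nu_{dd^cT_{m-j}}(s)}{s}ds$ and ensures finiteness of the right-hand side, hence existence of the strict transform. The secondary difficulty is the bookkeeping: the combinatorial expansion of $\widehat\omega^{N-p}$ and the precise matching of the indices $b\leftrightarrow j$ and $m-j$ across the families $J_1,J_2$. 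I expect this to be routine once the chart computation and the Demailly reduction are set up.
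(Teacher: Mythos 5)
Your route is, in all essentials, the paper's own: measure the mass of $\pi^\star(T_{|\Omega\smallsetminus L})$ against the power of a K\"ahler form of the blow-up, whose push-forward is $\beta_t+\beta_z+\alpha_z$; expand $(\beta_t+\beta_z+\alpha_z)^{N-p}$ into terms $\beta_t^{j}\w\beta_z^{k-j}\w\alpha_z^{N-p-k}$; integrate out the $t$-variables so that each term becomes an integral of $T_{m-j}\w\beta_z^{k-j}\w\alpha_z^{N-p-k}$ over the annulus $\Bbb B_n(\epsilon,r)$; and control these by the Lelong--Jensen formula applied to $T_{m-j}\w\beta_z^{k-j}$, the psh indices giving sign-favorable correction terms and the prh indices producing, under Condition $(C_0)$, exactly the contribution $-C_2(r)\sum_{j\in J_2}\int_0^r\nu_{dd^cT_{m-j}}(s)\,s^{-1}ds$. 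Letting $\epsilon\to0$ then gives the stated estimate, the locally finite mass near $E=\pi^{-1}(L)$, and hence the trivial extension $\widehat T$. This is precisely the paper's computation (the paper does not even invoke Demailly's inequality at this point, since the mass of a positive current is already dominated by such trace integrals, but your appeal to it is harmless and accomplishes the same reduction).

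The one genuine gap is your last step: you conclude $\pi_\star\widehat T=T$ ``since $\pi$ is a proper biholomorphism off the center $L$.'' That argument only yields $\pi_\star\widehat T=T$ on $\Omega\smallsetminus L$; the difference $\pi_\star\widehat T-T$ is then some current supported in $L=\{0\}\times\C^m$, and nothing you have said rules out that $T$ itself charges $L$ --- a positive current of bidegree $(p,p)$ which is not closed can perfectly well carry a component concentrated on a thin set, and the trivial extension would lose exactly that component. The paper closes this by observing that $\pi_\star\widehat T-T$ is $\C$-flat of bidimension $(n+m-p,n+m-p)$ and supported by $\{0\}\times\C^m$, whose dimension $m$ is strictly smaller than $n+m-p$ (because $p<n$), so it vanishes by the support theorem. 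Some argument of this kind --- flatness plus the support theorem, or another reason why the difference cannot charge $L$ --- is indispensable to obtain the equality $\pi_\star\widehat T=T$, i.e., the existence of the strict transform in the sense the theorem asserts; without it you have only constructed a positive extension of $\pi^\star(T_{|\Omega\smallsetminus L})$, not shown that it pushes forward to $T$.
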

      This Theorem generalizes a result of Giret where he considered the case $J_2=\emptyset$ (all $T_j$ are psh).
    \begin{proof}
        Let $\omega$ be the K\"ahler form of $\C^N[\{0\}\times\C^m]$.
      We have:
        $$\begin{array}{lcl}
         \ds(\pi_\star\omega)^{N-p}&=&\left(\beta_t+\beta_z+\alpha_z\right)^{N-p}\\
         &=&\ds\sum_{k=0}^{N-p}C_{N-p}^k (\beta_t+\beta_z)^k\w\alpha_z^{N-p-k}\\
         &=&\ds\sum_{k=0}^{N-p}C_{N-p}^k \left(\sum _{j=0}^{\min(m,k)}C_{k}^j\beta_t^j\w\beta_z^{k-j}\right)\w\alpha_z^{N-p-k}.\\
        \end{array}
        $$

        It is therefore sufficient to control the integrals:
        $$\ds\int_{\Bbb B_n(\varepsilon,r)\times\Bbb B_m(\sigma)}T\w \alpha_z^{N-p-k}\w\beta_z^{k-j}\w\beta_t^{j}$$
        for any integers $0\leq k\leq N-p,\ 0\leq j\leq \min(k,m)$  and $0<\varepsilon<r$. \\

        For $0\leq k\leq N-p$ and $0\leq j\leq \min(k,m),$ the Lelong-Jensen formula applied to $T_{m-j}\w\beta_z^{k-j}$ gives:

        \begin{equation}\label{eq2.1}
            \begin{array}{ll}
                &\ds\int_{\Bbb B_n(\epsilon,r)\times\Bbb B_m(\sigma)}T\w \alpha_z^{N-p-k}\w\beta_z^{k-j}\w\beta_t^{j}\\
                =&\ds\frac1{r^{2(N-p-k)}}\int_{\Bbb B_n(r)\times\Bbb B_m(\sigma)} T\w\beta_z^{N-p-j}\w\beta_t^{j}\\
                &-\ds\frac1{\epsilon^{2(N-p-k)}}\int_{\Bbb B_n(\epsilon)\times\Bbb B_m(\sigma)} T\w\beta_z^{N-p-j}\w\beta_t^{j}\\
                &-\ds\int_0^r\left(\frac{1}{s^{2(j-k)}}-\frac{s^{2(N-p-j)}}{r^{2(N-p-k)}}\right)\frac{\nu_{dd^cT_{m-j}}(s)}sds\\
                &+\ds\int_0^\epsilon\left(\frac{1}{s^{2(j-k)}} -\frac{s^{2(N-p-j)}}{\epsilon^{2(N-p-k)}}\right)\frac{\nu_{dd^cT_{m-j}}(s)}sds\\
                =&\ds r^{2(k-j)}\nu_{T_{m-j}}(r)-\ds\int_0^rs^{2(k-j)} \left(1-\frac{s^{2(N-p-k)}}{r^{2(N-p-k)}}\right)\frac{\nu_{dd^cT_{m-j}}(s)}sds\\
                & -\epsilon^{2(k-j)}\nu_{T_{m-j}}(\epsilon)+\ds\int_0^\epsilon s^{2(k-j)}\left(1-\frac{s^{2(N-p-k)}}{\epsilon^{2(N-p-k)}}\right)\frac{\nu_{dd^cT_{m-j}}(s)}sds.
            \end{array}
        \end{equation}

        If $j\in J_1$, Equality \ref{eq2.1} gives
        $$\begin{array}{ll}
            &\ds\int_{\Bbb B_n(\epsilon,r)\times\Bbb B_m(\sigma)}T\w \alpha_z^{N-p-k}\w\beta_z^{k-j}\w\beta_t^{j}\\
            \leq&\ds r^{2(k-j)}\nu_{T_{m-j}}(r)-\epsilon^{2(k-j)}\nu_{T_{m-j}}(\epsilon)\\
            & +\ds\int_0^\epsilon s^{2(k-j)}\left(1-\frac{s^{2(N-p-k)}}{\epsilon^{2(N-p-k)}}\right)\frac{\nu_{dd^cT_{m-j}}(s)}sds.
        \end{array}$$
        If $j\in J_2$, Equality \ref{eq2.1} gives
        $$\begin{array}{ll}
            &\ds\int_{\Bbb B_n(\epsilon,r)\times\Bbb B_m(\sigma)}T\w \alpha_z^{N-p-k}\w\beta_z^{k-j}\w\beta_t^{j}\\
            \leq&\ds r^{2(k-j)}\nu_{T_{m-j}}(r)-r^{2(k-j)}\ds\int_0^r\frac{\nu_{dd^cT_{m-j}}(s)}sds\\
            & -\epsilon^{2(k-j)}\nu_{T_{m-j}}(\epsilon)+\ds\int_0^\epsilon s^{2(k-j)}\left(1-\frac{s^{2(N-p-k)}}{\epsilon^{2(N-p-k)}}\right)\frac{\nu_{dd^cT_{m-j}}(s)}sds.
        \end{array}$$

       Taking the  limit when $\epsilon$ goes to $0$,  we deduce that the current $\pi^{\star}(T_{|\Omega{\smallsetminus L}})$ has a locally finite mass in a neighborhood of points of $E=\pi^{-1}(L)$. More precisely, there exists a constant $c$ such that
        $$\begin{array}{ll}
            &||\pi^\star(T_{|\Omega\smallsetminus L})||(\pi^{-1}\{(\Bbb B_n(r)\smallsetminus \{0\})\times\Bbb B_m(\sigma)\})\\
            \leq& c \ds \sum_{j=0}^{m}\left(|\nu_{T_{m-j}}(r)-\nu_{T_{m-j}}(0)|+C_1(r)\nu_{T_{m-j}}(r)\right)\\
            & \ds\hfill-\sum_{j\in J_2}C_2(r) \ds\int_0^r\frac{\nu_{dd^cT_{m-j}}(s)}sds.
        \end{array}$$
        Then the trivial extension $\widehat{T}$ of $\pi^\star(T|_{\Omega\smallsetminus L})$ by zero over $\pi^{-1}(L)$ exists. The current $\pi_\star\widehat{T}-T$ is $\C-$flat of bidimension $(n+m-p,n+m-p)$ on $\C^N$ supported by $\{0\}\times\C^m$, hence it vanishes by the support theorem.
    \end{proof}
    \begin{rem} Let $T$ be a positive current with support in the stripe $\Omega_1\times \Bbb B_m(\sigma_0)$.
        \begin{enumerate}
          \item If $T$ is  plurisubharmonic then $T$ has a positive strict transform. 
          \item If $T$ is  plurisuperharmonic and $T_j=\int_{\Bbb B_m(\sigma_0)}T\w\beta_t^{m-j}$ satisfy Condition $(C_0)$, $0\leq j\leq m$, then $T$ has a positive strict transform.
        \end{enumerate}        
    \end{rem}

  \section{Directional tangent cone to a positive current}
 In this section we denote by  $ T_j:=\int_B T\w \beta_t^{m-j}$ where $B$ is an open set of $\C^m$.
 \begin{defn}
     Let  $T$ be a positive current on an open set $\Omega$ of $\C^N=\C^n\times\C^m$. The directional tangent cone to  $T$ is the weak limit, if it exists, of the family  $\left( {h_a}^\star T\right)_{a\in\C}$ as $a\rightarrow 0$ where  $h_a$ is the directional dilatation  on $\C^N$ defined by $h_a(z,t)=(az,t)$.
 \end{defn}
 The main result of this paper is  the following:
     \begin{theo}\label{th3}
         Let $T$ be a positive current of bidegree $(p,p)$ on $\Omega $. We assume that for any integer $0\leq j\leq \min(m,p)$, the current $T_j$ is psh (resp. prh satisfying Condition $(C_0)$) and
         $$\ds \int_0^{r_0}\frac{|\nu_{T_j}(r)-\nu_{T_j}(0)|}{r}dr < +\infty.$$
         Then the directional tangent cone to $T$ exists.
      \end{theo}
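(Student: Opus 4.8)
The plan is to prove that the family $(h_a^\star T)$ admits a unique weak limit as $a\to0$ by reducing the pairing $\langle h_a^\star T,\phi\rangle$ to the radial quantities $\nu_{T_j}$ that are already under control. Since $h_a$ is a biholomorphism for $a\ne0$, $h_a^\star T$ is a well-defined positive $(p,p)$-current and, for a test form $\phi$ of bidegree $(N-p,N-p)$, one has $\langle h_a^\star T,\phi\rangle=\langle T,h_{1/a}^\star\phi\rangle$. Because $h_a$ fixes the $t$-variables, $\beta_t$ is $h_a$-invariant, $\beta_z$ is multiplied by $|a|^2$, and $\alpha_z=dd^c\log|z|^2$ is invariant; I would therefore decompose both $T$ and $\phi$ according to their $(z,t)$-bidegrees. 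The component of $\phi$ of $z$-bidegree $(\mu,\mu')$ picks up, under $h_{1/a}^\star$, the factor $a^{-\mu}\bar a^{-\mu'}$ together with the rescaling $\phi(z/a,t)$ of its coefficient; matching bidegrees with $T$, the surviving diagonal contributions pair the $t$-bidegree $(j,j)$ parts of $T$ (of complementary $z$-bidegree $(p-j,p-j)$) — exactly the parts captured by the $T_j$ — against $z$-forms of bidegree $(n-p+j,n-p+j)$, carrying the scalar factor $|a|^{-2(n-p+j)}$.

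First I would establish a uniform mass bound: on every compact $K$, $\|h_a^\star T\|(K)$ stays bounded as $a\to0$. This is the exact analogue of estimate (2.1): expanding $(\beta_t+\beta_z+\alpha_z)^{N-p}$ and applying the Lelong--Jensen formula (Lemma \ref{lem1}) to the currents $T_j\w\beta_z^{k-j}$ expresses the relevant integrals through $\nu_{T_j}(|a|)$, $\nu_{T_j}(0)$ and, in the prh case, the tails $\int_0^{|a|}\nu_{dd^cT_j}(s)\,s^{-1}ds$, each of which remains bounded under the hypotheses. Relative weak compactness then yields subsequential limits, so the whole difficulty becomes the uniqueness of the limit.

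For uniqueness I would test against a dense family of forms and prove that $\langle T,h_{1/a}^\star\phi\rangle$ converges as $a\to0$. On the diagonal components the radial factors are $|a|^{-2(n-p+j)}$, and Lelong--Jensen rewrites the matching integrals as combinations of $\nu_{T_j}(|a|)$ and of logarithmically weighted radial integrals $\int_{|a|}^{r_0}\bigl(\nu_{T_j}(r)-\nu_{T_j}(0)\bigr)\,r^{-1}dr$ produced by the scale-invariant form $\alpha_z$. The hypothesis $\int_0^{r_0}|\nu_{T_j}(r)-\nu_{T_j}(0)|\,r^{-1}dr<+\infty$ is precisely what lets these integrals converge as $a\to0$, so each diagonal piece has a limit (in the prh case the $\nu_{dd^cT_j}$ contribution is absorbed using Condition $(C_0)$ and Proposition \ref{prop1}). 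For the off-diagonal components, Demailly's inequality bounds the coefficients $|T_{I,J}|$ by the diagonal masses $T_{M,M}$, so the convergence and integrability just obtained for the diagonal parts transfer to them and the full pairing converges. Finally, once $\lim_{a\to0}h_a^\star T$ is shown to exist it is automatically invariant under $z\mapsto\lambda z$, since $h_\lambda^\star(\lim_a h_a^\star T)=\lim_a h_{\lambda a}^\star T=\lim_a h_a^\star T$, so the limit is indeed a cone.

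The main obstacle is this convergence step: reconciling the divergent prefactors $|a|^{-2(n-p+j)}$ with the concentration of $T$ near $\{z=0\}$. The Dini-type condition is exactly the device that tames the logarithmically weighted deviation $\nu_{T_j}(r)-\nu_{T_j}(0)$ and makes the limit finite and path-independent, while Demailly's inequality is what keeps the non-diagonal coefficients under control; making both mechanisms work uniformly in the argument of $a$, and carrying the prh case through Condition $(C_0)$, is where the genuine effort is concentrated.
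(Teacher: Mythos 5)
Your first step (the locally uniform mass bound obtained by expanding $(\beta_z+\beta_t+\alpha_z)^{N-p}$ and applying Lelong--Jensen to the $T_j\w\beta_z^{k-j}$, then weak compactness, then reduction to uniqueness of the limit) matches the paper. But your uniqueness argument has a genuine gap, in two places. First, you claim that Lelong--Jensen rewrites the pairings $\langle h_a^\star T,\phi\rangle$ for the ``diagonal'' components as combinations of $\nu_{T_j}(|a|)$ and weighted radial integrals. The Lelong--Jensen formula only applies to the rotation-invariant forms $\beta_z^k\w\alpha_z^l$ integrated over balls and annuli; a general test form $\phi$ is not radial, and its pairing with $T$ is not a function of the Lelong number data $\nu_{T_j}$ at all. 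This is precisely the content of Kiselman's counterexample quoted in the introduction: the radial quantities can converge while the weak limit of $h_a^\star T$ fails to exist, because the pairing oscillates in the \emph{argument} of $a$, which the functions $\nu_{T_j}(|a|)$ cannot see. Any argument that feeds only $|a|$ into radial data therefore cannot prove convergence. Second, Demailly's inequality bounds off-diagonal coefficients by diagonal masses; it transfers \emph{estimates}, never \emph{convergence}. Knowing the diagonal pairings converge says nothing about the limits of the off-diagonal ones, so the sentence ``the convergence and integrability just obtained for the diagonal parts transfer to them'' is unjustified.

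The mechanism your sketch lacks, and which the paper uses, is potential-theoretic. One works in the coordinates $w_1=z_1/z_n,\dots,w_n=z_n$, in which $h_a$ acts only on $w_n$, so each coefficient $T^a_{IJ,KL}$ of $h_a^\star T$ carries an explicit factor $1$, $a$, $\bar a$ or $|a|^2$. For a fixed test function $\varphi$ one sets $f_{IJ,KL}(a)=\int_{U\times B} T^a_{IJ,KL}\,\varphi\,d\tau$, computes the Laplacian $\partial^2 f_{IJ,KL}/\partial a\partial\bar a$ by expressing $\partial^2T_{IJ,KL}/\partial w_n\partial\bar w_n$ through the coefficients of $dd^cT$ and integrating by parts to put the remaining derivatives on $\varphi$, and then bounds this Laplacian by a function $\psi(|a|)$ built from $\gamma_{T_j}(|a|)=\nu_{T_j}(|a|)-\nu_{T_j}(|a|/2)$ and $\gamma_{dd^cT_j}(|a|)$; here Demailly's inequality with the optimized weight $\lambda_n$ yields the crucial square-root estimate for the coefficients with $n\in I$ or $n\in J$ (Lemma \ref{lem4}). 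The conclusion then comes from Lemma \ref{lem5} of Blel--Demailly--Mouzali: a bounded $\mathcal C^2$ function on the punctured disc whose Laplacian is dominated by $u(|a|)$ with $\int_0^{r_0}r|\log r|\,u(r)\,dr<+\infty$ has a limit at $0$. Your Dini hypothesis enters only at this final step, via the equivalence of that integrability with $\int_0^{r_0}\sum_j(\nu_{T_j}(r)-\nu_{T_j}(0))r^{-1}dr<+\infty$ and $\int_0^{r_0}\sum_j\nu_{dd^cT_j}(r)r^{-1}dr<+\infty$ (plus Cauchy--Schwarz for the square-root terms); it never yields a closed formula for the pairing. Without this Laplacian-plus-Lemma-\ref{lem5} device (or its first-order analogue, Lemma \ref{lem6}, in the closed case), the passage from control of radial masses to weak convergence does not go through.
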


  We start by giving some results useful for the proof of this theorem.
       \begin{rem}
            Let $T$ be  as above and $B$ an open set of $\C^m$, then for all integer $0\leq j\leq m$, the current $T_j$
          is positive, of bidimension  $(n-p+j,n-p+j)$ on  $\C^n$ and one has:
            $$h_a^\star T_j=(h_a^\star T)_j.$$
        \end{rem}

        \textbf{In  fact}, we can assume, without loss of generality, that $T$ is $\mathcal C^\infty$. Then
      $$h_a^\star\int_BT\w \beta_t^{m-j}=h_a^\star {p_1}_\star\left({T \w \beta_t^{m-j}}|_{\C^n\times B}\right)$$
      where
      $$\begin{array}{llcl}
        {p_1} :&\C^n\times \C^m & \longrightarrow  & \C^n\times\C^m \\
       & (z,t) & \longmapsto & (z,0).
      \end{array}$$
      One has
      $$\begin{array}{lcl}
      \ds h_a^\star {p_1}_\star\left({T \w \beta_t^{m-j}}|_{\C^n\times B}\right)&=&\ds{(h_{\frac{1}{a}}})_\star {p_1}_\star\left({T \w \beta_t^{m-j}}|_{\C^n\times B}\right)\\
      &=&\ds\left( {p_1}\circ h_{\frac{1}{a}}\right)_\star \left({T \w \beta_t^{m-j}}|_{\C^n\times B}\right)\\
      &=&\ds\left(h_{\frac{1}{a}}\circ {p_1}\right)_\star \left({T \w \beta_t^{m-j}}|_{\C^n\times B}\right)\\
      &=&\ds {p_1}_\star\left( h_a^\star {T\w \beta_t^{m-j}}|_{\C^n\times B}\right)\\
      &=&\ds \int_B h_a^\star T\w \beta_t^{m-j}.
      \end{array}$$

   \hfill $\square$
          \begin{lem}
              Let $T$ be a positive current of bidimension $(p,p)$ on an open set  $\Omega $ of $\C^n\times \C^m$. We assume that the current $T_j$ is psh for $j\in J_1$ and  prh satisfying  Condition $(C_0)$ for $j \in J_2$ where $J_1\cup J_2= \{0,...,\min(m,p)\}$. Then there exists $c>0$ such that
              \begin{equation}\label{eq3.1}
                \begin{array}{l}
                    \ds\int_{\Bbb B_n(r)\times B}h_a^\star T\w (\beta_z+\beta_t)^{N-p}\\
                    \ds\leq C r^{2(n-p)}\left(\sum_{j=0}^{\min(m,p)}\nu_{T_j}(r_0) +\sum_{j\in J_2}\int_0^{r_0}\left(\frac{s^{2(n-p+j-1)}}{r_0^{2(n-p+j-1)}}-1\right)\frac{\nu_{dd^cT_j}(s)}{s}ds.\right)
                \end{array}
             \end{equation}
           for $|a|r\leq r_0$.
        \end{lem}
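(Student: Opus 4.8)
The plan is to bound the integral by expanding the power $(\beta_z+\beta_t)^{N-p}$ and controlling each resulting term via the Lelong-Jensen formula, exactly paralleling the computation in the proof of the first theorem. First I would use the change of variables induced by the dilatation $h_a$, together with the identity $h_a^\star T_j=(h_a^\star T)_j$ from the preceding remark, to rewrite
\begin{equation*}
\int_{\Bbb B_n(r)\times B}h_a^\star T\w(\beta_z+\beta_t)^{N-p}
=\sum_{j=0}^{\min(m,p)}C_{N-p}^{j}\int_{\Bbb B_n(r)}h_a^\star T_{m-j}\w\beta_z^{N-p-j},
\end{equation*}
so that each summand is, up to the binomial constant, the projective mass $\nu$-type quantity $r^{2(n-p+j)}\nu_{h_a^\star T_{m-j}}(r)$ associated to the single-variable current $h_a^\star T_j$ on $\C^n$. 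Here I would expand $(\beta_z+\beta_t)^{N-p}=\sum_k C_{N-p}^k\beta_z^{N-p-k}\beta_t^k$ and observe that wedging $T$ with $\beta_t^k$ and integrating over $B$ produces precisely $T_{m-k}$, with only the indices $0\le k\le\min(m,p)$ surviving for bidegree reasons.

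Next I would invoke the scaling behaviour of the Lelong number under the dilatation: since $h_a$ multiplies the $z$-variable by $a$, one has $\nu_{h_a^\star T_{m-j}}(r)=\nu_{T_{m-j}}(|a|r)$, which reduces everything to evaluating the projective masses of the \emph{fixed} currents $T_{m-j}$ at radius $|a|r\le r_0$. For the indices $j\in J_1$ (the psh case), monotonicity of $\nu_{T_{m-j}}$ from Lemma \ref{lem1} gives immediately $\nu_{T_{m-j}}(|a|r)\le\nu_{T_{m-j}}(r_0)$. For the indices $j\in J_2$ (the prh case), I would apply Proposition \ref{prop1}: the auxiliary function $\Lambda_{T_{m-j}}$ is increasing, hence $\nu_{T_{m-j}}(|a|r)\le\Lambda_{T_{m-j}}(|a|r)\le\Lambda_{T_{m-j}}(r_0)$, and unwinding the definition of $\Lambda$ produces exactly the correction term $\int_0^{r_0}\bigl(s^{2(n-p+j-1)}/r_0^{2(n-p+j-1)}-1\bigr)\nu_{dd^cT_j}(s)\,s^{-1}ds$ appearing on the right-hand side of \eqref{eq3.1}. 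Collecting the two cases and factoring out $r^{2(n-p)}$ (after bounding the residual powers $r^{2j}\le r_0^{2j}$, absorbed into the constant $C$) yields the claimed inequality.

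The main obstacle I anticipate is the bookkeeping of indices and the correct matching of the prh correction term, rather than any deep analytic difficulty. In particular one must be careful that the exponent $2(n-p+j-1)$ in the integral matches the bidimension $q=n-p+j$ of the current $T_j$ (so that $2q-2=2(n-p+j-1)$ is the exponent arising from $\Lambda_{T_j}$), and that the relabeling between $T_j$ and $T_{m-j}$ is applied consistently. A secondary technical point is justifying the change of variables and the commutation $h_a^\star\,{p_1}_\star=\,{p_1}_\star\,h_a^\star$ on the level of masses; this is handled by the smoothing argument already indicated in the remark, reducing to the $\mathcal C^\infty$ case where the pushforward–pullback identities are classical. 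Once these identifications are in place, the estimate follows term by term from the two monotonicity statements, and the uniformity in $a$ is automatic since the final bound depends only on $r_0$ and not on $a$.
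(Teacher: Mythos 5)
Your proposal is correct and is essentially the paper's own proof: expand $(\beta_z+\beta_t)^{N-p}$ binomially, identify each surviving term with $r^{2(n-p+j)}\nu_{h_a^\star T_j}(r)$, use the scaling identity $\nu_{h_a^\star T_j}(r)=\nu_{T_j}(|a|r)$, and conclude by monotonicity of $\nu_{T_j}$ for $j\in J_1$ and of the auxiliary function $\Lambda_{T_j}$ from Proposition \ref{prop1} for $j\in J_2$, whose definition produces the correction integral in \eqref{eq3.1}. The only blemishes are bookkeeping ones: with the convention $T_j=\int_B T\wedge\beta_t^{m-j}$ the surviving terms of the expansion are $h_a^\star T_j\wedge\beta_z^{n-p+j}$ for $0\le j\le\min(m,p)$ (your first display mixes this with the complementary labelling $T_{m-j}\wedge\beta_z^{N-p-j}$ over the wrong index range), and the exponent $2(n-p+j-1)$ in \eqref{eq3.1}, versus the $2(n-p+j)$ actually arising from $\Lambda_{T_j}$, is an inconsistency internal to the paper's statement rather than a defect of your argument.
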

     \begin{proof} For $r>0$, we have
        $$\ds\int_{\Bbb B_n(r)\times B}h_a^\star T\w (\beta_z+\beta_t)^{N-p}=
                \ds \sum_{j=0}^{\min(m,p)}C_{N-p}^j\ds\int_{\Bbb B_n(r)\times B}h_a^\star T\w \beta_z^{n-p+j}\w \beta_t^{m-j}.$$
        Furthermore, if $j\in J_1$  then $\nu_{h_a^\star T_j}(r)=\nu_{T_j} (|a|r)\leq\nu_{T_j} (r_0)$ for $r\leq \frac{r_0}{|a|}\leq \min(1,\frac{r_0}{|a|})$. It follows that:
        $$\int_{\Bbb B_n(r)\times B}h_a^\star T\w \beta_z^{n-p+j}\w\beta_t^{m-j}\leq  r^{2(n-p+j)}\nu_{T_j}(r_0)\leq r^{2(n-p)}\nu_{T_j}(r_0).$$
        If $j\in J_2$ then using the proof of Proposition \ref{prop1}, we have $$\ds\frac{1}{r^{2(n-p+j)}}\int_{\Bbb B_n(r)\times B}h_a^\star T\w \beta_z^{n-p+j}\w \beta_t^{m-j}\leq\Lambda_{h_a^\star T_j}(r)\leq\Lambda_{T_j}(r_0).$$ It follows that  $$\ds \int_{\Bbb B_n(r)\times B}h_a^\star T \w \beta_z^{n-p+j}\w \beta_t^{m-j}\leq r^{2(n-p)}\Lambda_{T_j}(r_0).$$ The result follows by summing on $j$ from  0 to $\min(m,p)$.
    \end{proof}

According to the previous lemma the mass $h_a^\star T$ converges in a neighborhood of $(0,t)$, then the sequence $h_a^\star T$ converges on $\C^N$ if and only if it converges on a neighborhood of $(z_0,t)$  where $z_0\in\C^n\smallsetminus\{0\}$.\\
Using a dilatation and changement of coordinates if necessary, we can assume that $(z_0,t)=(0,...,z_n^0,t)$ where
$\frac{1}{2}<|z_n^0|<1$. We use the projective coordinates:
$$w_1=\frac{z_1}{z_n},...,w_n=z_n$$
$$T=\sum _{I,J,K,L} T_{IJ,KL}dw_I\w d\overline{w}_J\w dt_K\w d\overline{t}_L$$
$h_a$ will be written as: $h_a:(w',w_n,t)\mapsto(w',aw_n,t).$
We check that the  coefficients $T^a_{IJ,KL}$ of $h_{a}^\star T$ are given by:
$$ T^a_{IJ,KL}=\left\{\begin{array}{lcl}
                   T_{IJ,KL}(w',w_n,t) & if & n\not\in I \ and\ n\not \in J\\
                   a\;T_{IJ,KL}(w',w_n,t) & if &  n\in I \ and\  n\not \in J\\
                   \overline{a}\;T_{IJ,KL}(w',w_n,t) & if & n\not\in I \ and \ n\in J \\
                   |a|^2\;T_{IJ,KL} (w',w_n,t)& if & n\in I \ and \ n \in J
                 \end{array}\right.$$

\begin{lem} \label{lem4}
    Let $U$ be a neighborhood of $z_0 \in \C^n$ such that $U\times B \subset \Bbb B_n(\frac{1}{2},1)\times B$. If we note, for every integer $0\leq j\leq m$, by
    $$\gamma_{T_j}(r)=\nu_{T_j}(r)-\nu_{T_j}(r/2)\quad and \quad \gamma_{dd^cT_j}(r)=\nu_{dd^cT_j}(r)-\nu_{dd^cT_j}(r/2)$$
   then for  $|a|<r_0$, $r_0<R$, there exists $C_1,C_2,C_3>0$ such that the measure $T^a_{IJ,KL}$ satisfies the following estimates:
    $$\ds \int_{U\times B}|T^a_{IJ,KL}|\leq\left\{\begin{array}{lcl}
                    C_1 &  &\\
                   C_2 \ds\sum_{j=0}^m\gamma_{T_j}(|a|)+\gamma_{dd^cT_j}(|a|) & if  & n\in I\  and\  n\in J\\
                   C_3 \ds\left(\sum_{j=0}^m\gamma_{T_j}(|a|)+\gamma_{dd^cT_j}(|a|)\right)^{\frac12} & if  & n\in I\  or\  n\in J                 \end{array}\right.$$
\end{lem}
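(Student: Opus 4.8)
The plan is to estimate each coefficient integral $\int_{U\times B}|T^a_{IJ,KL}|$ by relating it, via the Demailly inequality recalled in the introduction, to the diagonal coefficients of $h_a^\star T$, and then to integrate these diagonal terms against the appropriate powers of $\beta_z$ and $\beta_t$ so as to reproduce the slice quantities $\nu_{T_j}$ and $\nu_{dd^cT_j}$. The three cases in the statement correspond exactly to the three behaviors of $T^a_{IJ,KL}$ under $h_a^\star$ recorded just before the lemma: when $n\notin I,J$ the coefficient is unchanged (factor $1$), when $n\in I\cap J$ it carries a factor $|a|^2$, and when $n$ belongs to exactly one of $I,J$ it carries a factor $|a|$ or $\overline a$ of modulus $|a|$.

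The key steps I would carry out are as follows. First, I would apply the Demailly inequality to the positive $(p,p)$-current $h_a^\star T$ written in the projective coordinates $w_1,\dots,w_n,t_1,\dots,t_m$, choosing the weights $\lambda$ adapted to the splitting between the $w$-directions and the $t$-directions so that $\lambda_I\lambda_J|T^a_{IJ,KL}|$ is controlled by a sum of diagonal coefficients $T^a_{MM,NN}$. Since on $U\times B\subset \Bbb B_n(\tfrac12,1)\times B$ the coordinate $|w_n|$ stays bounded away from $0$ and from $\infty$ (because $\tfrac12<|z_n^0|<1$), the weights and the Jacobian factors are all comparable to constants, so no singular behavior enters except through the explicit powers of $|a|$. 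Second, I would bound the integral of each diagonal coefficient $\int_{U\times B}T^a_{MM,NN}$ by an integral of $h_a^\star T\wedge\beta_z^{\ast}\wedge\beta_t^{\ast}$ over $\Bbb B_n(\tfrac12,1)\times B$, because a diagonal term is precisely what one picks up when wedging a positive current against the relevant monomial in $\beta_z,\beta_t$; the difference of the two radii $r$ and $r/2$ is what produces the annulus $\Bbb B_n(\tfrac12,1)$, and hence the differences $\gamma_{T_j}(|a|)=\nu_{T_j}(|a|)-\nu_{T_j}(|a|/2)$ and $\gamma_{dd^cT_j}(|a|)$ upon rescaling by $h_a$ (using $\nu_{h_a^\star T_j}(r)=\nu_{T_j}(|a|r)$ together with the Lelong--Jensen formula of Lemma \ref{lem1} to convert the annular mass into the stated combination of $\gamma_{T_j}$ and $\gamma_{dd^cT_j}$). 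Third, I would reinsert the $|a|$-powers from the coefficient formula: the case $n\in I\cap J$ gains a full factor $|a|^2$ against a diagonal bound already proportional to $\sum_j(\gamma_{T_j}(|a|)+\gamma_{dd^cT_j}(|a|))$, giving the clean linear estimate $C_2\sum_j(\gamma_{T_j}+\gamma_{dd^cT_j})$; the mixed case $n\in I$ or $n\in J$ carries only one factor $|a|$, and here I would use the Cauchy--Schwarz form of the Demailly inequality—bounding $\lambda_I\lambda_J|T^a_{IJ,KL}|$ by the geometric mean of two diagonal terms—which after integration yields the square-root $(\sum_j(\gamma_{T_j}+\gamma_{dd^cT_j}))^{1/2}$; and the case $n\notin I,J$ gives a uniformly bounded contribution $C_1$ since no vanishing $|a|$-power is present and the total mass is controlled by the previous lemma's estimate \eqref{eq3.1}.

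The main obstacle I anticipate is the mixed case $n\in I$ or $n\in J$, where the square-root rate $(\,\cdot\,)^{1/2}$ must be produced. The difficulty is that a single coefficient with $n$ in exactly one multi-index is not itself a diagonal entry, so it cannot be bounded directly by a mass integral; one must instead invoke the Demailly inequality in its bilinear/Cauchy--Schwarz guise, estimating $|T^a_{IJ,KL}|$ by $\bigl(T^a_{I'I',K'K'}\bigr)^{1/2}\bigl(T^a_{J'J',L'L'}\bigr)^{1/2}$ for suitable diagonal neighbors, and then applying the Cauchy--Schwarz inequality to the integral over $U\times B$. Because exactly one of the two factors carries the annular smallness $\gamma(|a|)$ while the other is controlled by the uniform bound coming from \eqref{eq3.1}, the product integrates to the geometric mean, explaining the exponent $\tfrac12$. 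Tracking the $w_n$-weights carefully so that the $|a|$ and $|a|^{1/2}$ powers match the claimed constants is the delicate bookkeeping, but the mechanism is the standard positivity-plus-Cauchy--Schwarz argument; the only genuinely new input is rewriting annular masses in terms of the slice Lelong functions via Lemma \ref{lem1}.
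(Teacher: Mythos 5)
Your first estimate is indeed the paper's: the uniform bound comes from inequality (\ref{eq3.1}) with $r=1$ plus Demailly's inequality with unit weights. The genuine gap is in the mechanism you propose for the smallness of the diagonal terms containing the direction $n$, which is the heart of the second and third estimates. You bound $\int_{U\times B}T^a_{M,M}$ (for $n\in M$) by annular masses of the type $\int_{\Bbb B_n(1/2,1)\times B}h_a^\star T\w\beta_z^{\ast}\w\beta_t^{\ast}$ and claim that Lelong--Jensen, after the rescaling $\nu_{h_a^\star T_j}(r)=\nu_{T_j}(|a|r)$, converts these into $\gamma_{T_j}(|a|)+\gamma_{dd^cT_j}(|a|)$. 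It does not: Lemma \ref{lem1} telescopes the annular mass taken against $\alpha_z^q=(dd^c\log|z|^2)^q$, \emph{not} against $\beta_z^q$. The $\beta_z$-annular mass of $h_a^\star T_j$ equals $\nu_{h_a^\star T_j}(1)-2^{-2q}\nu_{h_a^\star T_j}(1/2)=\nu_{T_j}(|a|)-2^{-2q}\nu_{T_j}(|a|/2)$, which tends to $(1-2^{-2q})\nu_{T_j}(0)$ as $a\to 0$, in general a strictly positive number, so no smallness can be extracted this way. The missing idea, on which the paper's proof turns, is this: because $n\in M$, the term $T^a_{M,M}$ pairs only against the $w'$- and $t$-directions, so $\sum_{n\in M}T^a_{M,M}$ integrates against $(dd^c|w'|^2+\beta_t)^{N-p}$; in the projective coordinates the form $\alpha=dd^c\log(1+|w'|^2)$ involves only $w'$, and on $U$ (where $|w'|$ is small) one has $dd^c|w'|^2\leq 4\alpha$. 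Only after this comparison can Lelong--Jensen be applied on the annulus to $h_a^\star T\w\beta_t^{m-j}\w\alpha^{n-p+j}$, yielding $\gamma_{T_j}(|a|)+C\,\gamma_{dd^cT_j}(|a|)$. Without introducing $\alpha$, your second and third estimates are unproved.

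Two further points. Your accounting of the $|a|$-powers double counts: the coefficients $T^a_{IJ,KL}$ already contain the factors $1,a,\overline a,|a|^2$, and these are precisely what make $\nu_{h_a^\star T_j}(r)=\nu_{T_j}(|a|r)$ true; there is nothing left to ``reinsert'', and if the diagonal bound were already $O\big(\sum_j\gamma_{T_j}+\gamma_{dd^cT_j}\big)$, an extra factor $|a|^2$ would produce a strictly stronger estimate than the lemma claims, a sign the bookkeeping is inconsistent. Second, the pointwise Cauchy--Schwarz bound $|T^a_{IJ,KL}|\leq(T^a_{I'I'})^{1/2}(T^a_{J'J'})^{1/2}$ that you rely on in the mixed case need not hold for a positive $(p,p)$-current: positivity does not make the full coefficient matrix positive semidefinite, and the only stated tool is the weighted Demailly inequality. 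The paper extracts the exponent $\frac12$ from that inequality alone, taking $\lambda_k=1$ for $k\neq n$ and $\lambda_n$ free, which gives $\lambda_n\int_{U\times B}|T^a_{IJ,KL}|\leq C_{12}+C_{13}\lambda_n^2\sum_j\big(\gamma_{T_j}(|a|)+\gamma_{dd^cT_j}(|a|)\big)$, and then optimizing $\lambda_n=\big(\sum_j\gamma_{T_j}(|a|)+\gamma_{dd^cT_j}(|a|)\big)^{-1/2}$. This constant-weight optimization yields exactly the $\sqrt{AB}$ bound your integral Cauchy--Schwarz aims at, but uses only what is actually available.
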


\begin{proof}\
 \begin{itemize}
          \item The set $\overline{U}$ is compact and the positive form $\beta$ is $\mathcal{C}^{\infty}$, so one has: $\beta\geq C_4dd^c|w|^2$ on $U$. The inequality (\ref{eq3.1}), with $r=1$, implies: $\ds \int_{U\times B}T_{M,M}^a\leq C_5$ . The Demailly's inequality with the choice \\ $\lambda_1=...=\lambda_N=1$, gives
              $$\int_{U\times B}|T_{IJ,KL}^a|\leq C_6\sum_{M\in\mathscr M_{IJ,KL}}\int_{U\times B}T_{M,M}^a\leq C_1.$$
              The first estimate of the lemma is proved.
          \item To proof the second estimate, we observe that  $\alpha\geq C_7 \beta'$ on $U$ with $\beta'=dd^c|w'|^2$. In fact, $\ds\alpha=dd^c\log(1+|w'|^2)\geq \frac1{(1+|w'|^2)^2}\beta'\geq \frac14\beta'$ on $U$. It follows that:
              $$\begin{array}{lcl}
                  \ds\int_{U\times B}\sum_{n\in M}T_{M,M}^a & = & \ds \int_{U\times B} h_a^\star T\w (dd^c|w'|^2+\beta_t)^{N-p} \\
                  & \leq &\ds C_8\sum_{j=0}^{m}\int_{U\times B} h_a^\star T\w \beta_t^{m-j}\w \alpha^{n-p+j}\\
                  &\leq& \ds C_8\sum_{j=0}^{m}\int_{\Bbb B_n(1/2,1)\times B} h_a^\star T\w \beta_t^{m-j}\w \alpha^{n-p+j}.
                \end{array}$$
              Using Lelong-Jensen formula for $T_j=\int_B T\w \beta_t^{m-j}$, $r_2=1$ and $r_1=1/2$, one has:

              $$\begin{array}{l}
                   \ds\int_{\Bbb B_n(1/2,1)\times B} h_a^\star T\w \beta_t^{m-j}\w \alpha^{n-p+j}\\
                  \leq  \ds \nu_{T_j}(|a|)-\nu_{T_j}(|a|/2)-\int_{\frac12}^1\left(\frac1{t^{2p}} -1 \right)t^{2p-1}\nu_{dd^c(h_a^\star T_j)}(t)dt\\
                   \hfill\ds-\left(\frac1{2^{2p}}-1\right) \int_0^{\frac12} t^{2p-1}\nu_{dd^c(h_a^\star T_j)}(t)dt\\
                  \ds \leq\nu_{T_j}(|a|)-\nu_{T_j}(|a|/2)-\int_{\frac12}^1\frac{\nu_{dd^cT_j}(|a|t)}t dt
                  \ds +\int_0^1t^{2p-1}\nu_{dd^cT_j}(|a|t)dt\\
                   \ds \leq(\nu_{T_j}(|a|)-\nu_{T_j}(|a|/2))+C_9(\nu_{dd^cT_j}(|a|)-\nu_{dd^cT_j}(|a|/2))\\
                   \leq  \ds C_8\gamma_{T_j}(|a|)+C_9\gamma_{dd^cT_j}(|a|).
              \end{array}$$

              Hence $$\ds\int_{U\times B}\sum_{n\in M}T_{M,M}^a \leq \sum_{j=0}^{m} C_8\gamma_{T_j}(|a|)+C_9\gamma_{dd^cT_j}(|a|).$$
              The second estimate is proved  for $n\in M$.\\
              In the general case, $I,J\ni n$, using the Demailly's inequality for \\$\lambda_1=...=\lambda_N=1$, we obtain:
              $$\int_{U\times B}|T_{IJ,KL}^a|\leq C_{10}\sum_{M\in\mathscr M_{IJ,KL}}\int_{U\times B}T_{M,M}^a\leq C_2\sum_{j=0}^{m}(\gamma_{T_j}(|a|)+\gamma_{dd^cT_j}(|a|))$$
              Hence we get The second estimate.
          \item For the third estimation, it suffices to assume that $n\in I$ and $n\not\in J$. Using the Demailly's inequality again with $\lambda_k=1$ for every $k\neq n$ and $\lambda_n>0$, one has:
              $$\begin{array}{l}
                  \ds\lambda_n\int_{U\times B} |T_{IJ,KL}^a|\\
                  \leq \ds  C_{11}\int_{U\times B}\left(\sum_{n\not\in M\in\mathscr M_{IJ,KL}}T_{M,M}^a +\lambda_n^2\sum_{n\in M\in\mathscr M_{IJ,KL}}T_{M,M}^a \right) \\
                  \leq\ds C_{12}+ C_{13}\lambda_n^2\sum_{j=0}^{m}(\gamma_{T_j}(|a|)+\gamma_{dd^cT_j}(|a|)).
                \end{array}$$
              The third estimate can be deduced by choosing $$\lambda_n=\frac1{\ds\left(\sum_{j=0}^{m}\gamma_{T_j}(|a|) +\gamma_{dd^cT_j}(|a|)\right)^{\frac12}}.$$
        \end{itemize}
\end{proof}

    \begin{lem} (See \cite{Bl-De-Mo}) \label{lem5}
        Let $f$ be a  $\mathcal{C}^2$ function defined on the pointed disc $\mathbb D^*(0,r_0)\subset\C$. We assume that $f$ is bounded and there exists a measurable function $u:]0,r_0]\rightarrow \R^+$ where $\int_0^{r_0}r|\log r|u(r)dr<+\infty$ and $ |\Delta f (a)|\leq u(|a|)$.
        Then $f(a)$ has a limit when $a$ goes to $0$.
    \end{lem}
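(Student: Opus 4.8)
The plan is to decompose $f$ near the origin as the sum of a logarithmic potential, which will be continuous up to $0$, and a bounded harmonic function, which extends across the puncture. First I would set $g:=\Delta f$; by hypothesis $g$ is continuous on $\mathbb D^*(0,r_0)$ and satisfies $|g(\zeta)|\le u(|\zeta|)$, so that $\int_{\mathbb D(0,r_1)}|g|\,dA\le 2\pi\int_0^{r_1}\rho\,u(\rho)\,d\rho<+\infty$. After shrinking to a disc $\mathbb D(0,r_1)$ with $r_1<\min(r_0,\tfrac12)$, I define the logarithmic potential
$$v(z):=\frac1{2\pi}\int_{\mathbb D(0,r_1)}\log|z-\zeta|\,g(\zeta)\,dA(\zeta),$$
so that $\Delta v=g$ in the sense of distributions on $\mathbb D(0,r_1)$.

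The main work, and the step I expect to be the real obstacle, is to show that $v$ is bounded and in fact continuous up to $z=0$. Here I would use the mean-value identity $\tfrac1{2\pi}\int_0^{2\pi}\log|z-\rho e^{i\phi}|\,d\phi=\log\max(|z|,\rho)$ together with $|z-\zeta|\le 2r_1<1$ (so the kernel keeps a constant sign), which reduces, in polar coordinates, to
$$|v(z)|\le \int_{|z|}^{r_1}\rho\,|\log\rho|\,u(\rho)\,d\rho+\bigl|\log|z|\bigr|\int_0^{|z|}\rho\,u(\rho)\,d\rho.$$
The first term is bounded by the hypothesis $\int_0^{r_0}r\,|\log r|\,u(r)\,dr<+\infty$; for the second I would use $|\log\rho|\ge\bigl|\log|z|\bigr|$ for $\rho\le|z|$, which yields $\bigl|\log|z|\bigr|\int_0^{|z|}\rho\,u(\rho)\,d\rho\le\int_0^{|z|}\rho\,|\log\rho|\,u(\rho)\,d\rho\to 0$. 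The same splitting applied to $v(z)-v(0)=\tfrac1{2\pi}\int(\log|z-\zeta|-\log|\zeta|)\,g(\zeta)\,dA$ shows $v(z)\to v(0)$, i.e. $v$ is continuous at the origin. This is precisely where the weight $r\,|\log r|$ is tailored to the two-dimensional logarithmic kernel.

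Finally, set $h:=f-v$. On the punctured disc one has $\Delta h=\Delta f-g=0$, so $h$ is harmonic there; since $f$ is bounded and $v$ is bounded near $0$, $h$ is a bounded harmonic function on $\mathbb D^*(0,r_1)$. By the removable-singularity theorem for harmonic functions — the expansion $h=a_0+b_0\log r+\sum_{n\neq 0}(a_n r^{n}+b_n r^{-n})e^{in\theta}$ forces $b_0=0$ and all $b_n=0$ under boundedness — the function $h$ extends harmonically across $0$ and in particular has a limit as $a\to 0$. Combining this with the continuity of $v$ at $0$, I conclude that $f=h+v$ has a limit at the origin.
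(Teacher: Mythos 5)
The paper offers no proof of this lemma at all --- it is imported verbatim from \cite{Bl-De-Mo} --- so your argument has to stand on its own, and it essentially does: the decomposition $f=v+h$ into the logarithmic potential of $\Delta f$ plus a bounded harmonic function, with the removable singularity theorem disposing of $h$, is the standard (and almost certainly the cited) proof, and your boundedness estimate for $v$ is correct; the weight $r|\log r|$ is indeed exactly what the two-dimensional kernel requires.

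The one step you dispatch too quickly is the claim that ``the same splitting'' gives $v(z)\to v(0)$. If you split $v(z)-v(0)=\frac1{2\pi}\int(\log|z-\zeta|-\log|\zeta|)\,g\,dA$ at $|\zeta|=|z|$, the singularity $\zeta=z$ lies on the boundary of the outer region, where $\log|z-\zeta|-\log|\zeta|$ is \emph{not} $O(|z|/|\zeta|)$; you should split at $|\zeta|=2|z|$ instead, so that $|z-\zeta|\geq|\zeta|/2$ outside. The inner part is then handled by your boundedness computation (treat the two logarithms separately; each contributes at most $\int_0^{2|z|}\rho|\log\rho|u(\rho)\,d\rho\to0$, using the mean-value identity for the $\log|z-\zeta|$ term), while the outer part is bounded by $2|z|\int_{2|z|}^{r_1}u(\rho)\,d\rho$. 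This last quantity tends to $0$, but not for free, since $u$ itself need not be integrable at the origin: given $\epsilon>0$, choose $\delta<e^{-1}$ with $\int_0^\delta\rho|\log\rho|u(\rho)\,d\rho<\epsilon$; then for $2|z|<\delta$ one has
$$|z|\int_{2|z|}^{\delta}u(\rho)\,d\rho\;\leq\;\frac{1}{2|\log\delta|}\int_{0}^{\delta}\rho|\log\rho|\,u(\rho)\,d\rho\;<\;\epsilon,\qquad\text{while}\qquad |z|\int_{\delta}^{r_1}u(\rho)\,d\rho\;\longrightarrow\;0,$$
because $\rho\geq 2|z|$ and $|\log\rho|\geq|\log\delta|$ on $[2|z|,\delta]$. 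With this estimate supplied --- and one word about Weyl's lemma, since $v$ solves $\Delta v=g$ only distributionally, so $h=f-v$ is a priori only distributionally harmonic on the punctured disc before you invoke the Laurent-type expansion --- your proof is complete and coincides with the intended one.
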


\subsection{Proof of Theorem \ref{th3}}

The proof of the main theorem will be done in the case of positive psh currents, the case of positive prh currents is similar with some simple modifications, that's why we assume that $T$ is positive psh. The lemma \ref{lem4} shows that the mass of $T^a_{IJ,KL}$ goes to  $0$ when $n\in I$ or  $n\in J.$\\
For $n\not\in I=\{i_1,..,i_q\}$ and  $n\not \in J=\{j_1,..,j_{q'}\},$ let $\varphi\in \mathcal{D}(U\times B)$. We set \\
$$\begin{array}{lcl}
    f_{IJ,KL}(a)&=&\ds\int_{U\times B}T^a_{IJ,KL}(w,t)\varphi(w,t)d\tau(w,t)\\
    &=&\ds\int_{U\times B}T_{IJ,KL}(w',aw_n,t)\varphi(w,t)d\tau(w,t).
\end{array}$$
The function $f_{IJ,KL}$ is $\mathcal C^\infty$ on $\mathbb D^\star(0,R):=\{a\in \C;0<|a|<R\}$ and bounded in a neighborhood of $0$. We have:
$$\frac{\p^2f_{IJ,KL}}{\p a\p \overline{a}}(a)=\ds\int_{U\times B}|w_n|^2\frac{\p^2T_{IJ,KL}}{\p w_n\p \overline{w}_n}(w',w_n,t)\varphi(w,t)d\tau(w,t).$$
We observe that the coefficients  $dw_{I\cup \{n\}}\w d\overline{w}_{J\cup \{n\}}\w dt_K\w d\overline{t}_J$ in the expression of  $dd^c T$ is
$$\begin{array}{ll}
            &\ds (dd^cT)_{I\cup\{n\}J\cup\{n\},KL}\\
            =&\ds(-1)^{q'}\frac{\p^2T_{IJ,KL}}{\p w_n \p \overline{w}_n} + \sum_{k=1}^q \sum_{s=1}^{q'}(-1)^{q+s+k}\frac{\p^2T_{I(k)J(s),KL}}{\p w_{i_k} \p \overline{w}_{j_s}}\\
            &\ds+\sum_{s=1}^{q'}(-1)^{s-1}\frac{\p^2T_{IJ(s),KL}}{\p w_{n} \p \overline{w}_{j_s}} + \sum_{k=1}^q(-1)^{k-1}\frac{\p^2T_{I(k)J,KL}}{\p w_{i_k} \p \overline{w}_{n}}\\
            &+\ds\sum_{e=1}^{p-q} \sum_{e'=1}^{p-q'}(-1)^{q'+q+e'}\frac{\p^2T_{I\cup\{n\}J\cup\{n\},K_e L_{e'}}}{\p t_{i_e} \p \overline{t}_{j_{e'}}}\\
            &+\ds \sum_{e'=1}^{p-q'}(-1)^{e'-q'}\frac{\p^2T_{I J\cup\{n\},KL_f}}{\p w_{n} \p \overline{t}_{j_{e'}}}+\ds \sum_{e=1}^{p-q}(-1)^{e}\frac{\p^2T_{I\cup\{n\} J,K_e L}}{\p t_{i_e} \p \overline{w}_{n}}.
        \end{array}$$
with $I(k)=I\smallsetminus\{i_k\}\cup \{n\}$, $J(s)=J\smallsetminus\{j_s\}\cup \{n\}$, $K_e=K\smallsetminus\{i_e\}$ et  $L_{e'}=L\smallsetminus\{j_{e'}\}$. The previous equality gives:

 $$\begin{array}{ll}
            &\ds\frac{\p^2f_{IJ,KL}}{\p a\p\overline{ a}}(a)\\
            =&\ds(-1)^{q'}\int_{U\times B}\frac{|w_n|^2}{|a|^2}(dd^cT)^a_{I\cup\{n\}J\cup\{n\},KL}\varphi(w,t)d\tau(w,t)\\
            &+\ds  \sum_{k=1}^q \sum_{s=1}^{q'}(-1)^{+q+q'+1s+k}\ds\int_{U\times B}\frac{|w_n|^2}{|a|^2}T^a_{I(k)J(s),KL}\frac{\p ^2\varphi}{\p w_{i_k}\p \overline{w}_{j_s}}d\tau(w,t)\\
            &+\ds \sum_{k=1}^{q}(-1)^{k+q'}\ds\int_{U\times B}\frac{|w_n|^2}{a}T^a_{I(k)J,KL}\frac{\p ^2\varphi}{\p w_{i_k}\p \overline{w}_{n}}d\tau(w,t)\\
            &+\ds \sum_{s=1}^{q'}(-1)^{s+q'}\ds\int_{U\times B}\frac{|w_n|^2}{\overline{a}}T^a_{IJ(s),KL}\frac{\p ^2\varphi}{\p w_{n}\p \overline{w}_{j_s}}d\tau(w,t)\\
            &+\ds  \sum_{e=1}^{p-q} \sum_{e'=1}^{p-q'}(-1)^{q+e'+1}\ds\int_{U\times B}\frac{|w_n|^2}{|a|^2}T^a_{I\cup\{n\}J\cup\{n\},K_eL_{e'}}\frac{\p ^2\varphi}{\p t_{i_e}\p \overline{t}_{j_{e'}}}d\tau(w,t)\\
            &+\ds \sum_{e'=1}^{p-q'}(-1)^{e'+1}\ds\int_{U\times B}\frac{|w_n|^2}{a}T^a_{IJ\cup\{n\},KL_{e'}}\frac{\p ^2\varphi}{\p w_{n}\p \overline{t}_{j_{e'}}}d\tau(w,t)\\
            &+\ds \sum_{e=1}^{p-q}(-1)^{e+q'+1}\ds\int_{U\times B}\frac{|w_n|^2}{\overline{a}}T^a_{I\cup\{n\}J,K_eL}\frac{\p ^2\varphi}{\p t_{i_{e}}\p \overline{w}_{n}}d\tau(w,t).
        \end{array}$$
The lemma \ref{lem4} gives:
$$\begin{array}{l}
            \ds\left|\frac{\partial ^2 f_{IJ,KL}}{\partial a\partial \overline{a}}(a)\right|\\
            \leq \ds C_1\sum_{j=0}^m\frac{\gamma_{dd^cT_j}(|a|)}{|a|^2}+C_2\sum_{j=0}^m\frac{\gamma_{T_j}(|a|) +\gamma_{dd^cT_j}(|a|)}{|a|^2}\\
            \ds\hfill+C_3 \sum_{j=0}^m\frac{\sqrt{\gamma_{T_j}(|a|)+\gamma_{dd^cT_j}(|a|)}}{|a|}\\
            \leq  \ds C \left(\sum_{j=0}^m\frac{\gamma_{T_j}(|a|)+\gamma_{dd^cT_j}(|a|)}{|a|^2} +\sum_{j=0}^m\frac{\sqrt{\gamma_{T_j}(|a|)+\gamma_{dd^cT_j}(|a|)}}{|a|}\right)\\
            =C \psi(|a|).
        \end{array}$$

         According to lemma \ref{lem5}, the function $f_{IJ,KL}(a)$ has a limit at $0$ if $\psi$ satisfies
        $$\int_0^{r_0}r|\log r|\psi(r)dr<+\infty.$$

        It's easy to check that there is equivalence between $$\int_0^{r_0}\sum_{j=0}^m\frac{\gamma_{T_j}(r)+\gamma_{dd^cT_j}(r)}r|\log r|dr<+\infty $$
       and $$\int_0^{r_0}\sum_{j=0}^m\frac{\nu_{T_j}(r)-\nu_{T_j}(0)}rdr<+\infty\quad \& \quad \int_0^{r_0}\sum_{j=0}^m\frac{\nu_{dd^cT_j}(r)}rdr<+\infty.$$
        These conditions are exactly the assumptions of the main theorem. So we conclude that
        \begin{equation}\label{eq3.2}
            \int_0^{r_0}\sum_{j=0}^m\frac{\gamma_{T_j}(r)+\gamma_{dd^cT_j}(r)}r|\log r|dr<+\infty.
        \end{equation}
       Using Cauchy-Schwarz's Inequality, Equation (\ref{eq3.2}) gives
        $$\begin{array}{l}
            \ds\int_0^{r_0}\sqrt{\gamma_{T_j}(r)+\gamma_{dd^cT_j}(r)}|\log r|dr\\
            \leq  \ds \left( \int_0^{r_0}\frac{\gamma_{T_j}(r)+\gamma_{dd^cT_j}(r)}r|\log r|dr\right)^{1/2}\times \left( \int_0^{r_0}r|\log r|dr\right)^{1/2}
            <+\infty.
        \end{array}$$
       It follows that:
        $$\int_0^{r_0}r|\log r|\psi(r)dr<+\infty$$
        This completes the proof of the main theorem.

\subsection{Case of closed currents}
    In the case of positive closed currents, one has  a second condition which ensures the existence of the tangent cone, and it is given by the following theorem, where we use the following lemma to show it:
\begin{lem}(See \cite{Bl-De-Mo})\label{lem6}
    Let  $g$ be a  $\mathcal C^1$ function defined on the pointed disc $\mathbb D^*(0,r_0)$. We assume that it exists a measurable  function $u:]0,r_0]\rightarrow \R_+$ satisfying $\int_0^{r_0}u(r)dr<+\infty$ such that: $|dg(a)|\leq u(|a|).$ Then $g(a)$ has a limit when $a$ goes to $0$.
\end{lem}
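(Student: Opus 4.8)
The plan is to work in polar coordinates $a=\rho e^{i\theta}$ and to split the variation of $g$ into a radial part and an angular part. Writing $\partial_\rho a=e^{i\theta}$ and $\partial_\theta a=i\rho e^{i\theta}$, the hypothesis $|dg(a)|\leq u(|a|)$ yields at once the two pointwise bounds $|\partial_\rho g(\rho e^{i\theta})|\leq u(\rho)$ and $|\partial_\theta g(\rho e^{i\theta})|\leq \rho\,u(\rho)$, using $|dg(a)\cdot v|\leq |dg(a)|\,|v|$. First I would fix an angle $\theta$ and integrate the radial bound: for $0<\sigma<\rho<r_0$ one has $|g(\rho e^{i\theta})-g(\sigma e^{i\theta})|\leq\int_\sigma^\rho u(s)\,ds$. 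Since $\int_0^{r_0}u<+\infty$, the tail $\int_0^\rho u$ tends to $0$, so $s\mapsto g(s e^{i\theta})$ is Cauchy as $s\to 0$; denote its limit by $L(\theta)$. Letting $\sigma\to 0$ in the same estimate gives the crucial control $|g(\rho e^{i\theta})-L(\theta)|\leq\int_0^\rho u(s)\,ds$, whose right-hand side is independent of $\theta$.

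The next step is to show that $L$ is constant. Integrating the angular bound over an arc at fixed radius gives $|g(\rho e^{i\theta})-g(\rho e^{i\theta'})|\leq 2\pi\,\rho\,u(\rho)$ for all $\theta,\theta'$. Here lies the main obstacle: the quantity $\rho\,u(\rho)$ need not tend to $0$, since $\int_0^{r_0}u<+\infty$ does not forbid tall thin spikes of $u$. The remedy is that integrability nevertheless forces $\liminf_{\rho\to 0}\rho\,u(\rho)=0$: otherwise $u(\rho)\geq c/\rho$ near $0$ for some $c>0$, which is not integrable. Choosing a sequence $\rho_k\to 0$ along which $\rho_k\,u(\rho_k)\to 0$ and passing to the limit in the arc estimate yields $L(\theta)=L(\theta')$ for all $\theta,\theta'$; call the common value $\ell$.

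Finally I would combine the two steps: substituting $L(\theta)=\ell$ into the uniform radial estimate gives $|g(\rho e^{i\theta})-\ell|\leq\int_0^\rho u(s)\,ds$ for every $\theta$, and the right-hand side tends to $0$ as $\rho\to 0$. Hence $g(a)\to\ell$ as $a\to 0$, which is the claim. The only delicate point is the passage from convergence along each ray to a genuine two-dimensional limit; this is exactly what the $\theta$-independent bound $\int_0^\rho u$ secures, once the ray-limits have been identified as a single constant through the $\liminf$ argument above.
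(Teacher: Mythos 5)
The paper contains no proof of this lemma: it is quoted directly from \cite{Bl-De-Mo}, so there is no internal argument to compare yours against. Judged on its own, your proof is correct and complete, and it is in substance the standard argument for this statement. The two pointwise bounds $|\partial_\rho g|\leq u(\rho)$ and $|\partial_\theta g|\leq \rho\,u(\rho)$ follow correctly from $|dg(a)|\leq u(|a|)$; the radial integration gives the ray limits $L(\theta)$ with the $\theta$-uniform estimate $|g(\rho e^{i\theta})-L(\theta)|\leq\int_0^\rho u(s)\,ds$; and you correctly identified the one genuinely delicate point, namely that the arc estimate $2\pi\rho\,u(\rho)$ need not tend to $0$. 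Your remedy is sound: if $\liminf_{\rho\to 0^+}\rho\,u(\rho)=c>0$, then $u(\rho)\geq c/(2\rho)$ on some interval $]0,\delta[$, contradicting $\int_0^{r_0}u(r)\,dr<+\infty$; hence there is a sequence $\rho_k\to 0$ with $\rho_k u(\rho_k)\to 0$, which identifies all ray limits with a single constant $\ell$, and the $\theta$-uniform radial bound then upgrades ray convergence to a true limit $g(a)\to\ell$. Two further points in your favor: the argument never uses boundedness of $g$ (which this lemma, unlike Lemma \ref{lem5}, does not assume — boundedness along rays comes for free from the integrability of $u$), and the measurability of $u$ causes no trouble since the $\liminf$ argument is purely pointwise. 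Your proof could be inserted verbatim where the paper merely cites \cite{Bl-De-Mo}.
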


\begin{theo}\label{th4}
         Let $T$ be a positive closed current of bidegree $(p,p)$ on an open set  $\Omega $ of $\C^n\times\C^m.$ We assume that for all integer $j \in [0,m]$ the current $T_j$ is closed and
         $$\ds \int_0^{r_0}\frac{\sqrt{\nu_{T_j}(r)-\nu_{T_j}(\frac r2)}}{r}dr < +\infty.$$
         Then the directional tangent cone to $T$ exists.
\end{theo}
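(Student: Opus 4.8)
The plan is to follow the skeleton of the proof of Theorem \ref{th3}, but to exploit closedness so as to estimate the \emph{first} derivatives of the coefficient functions and invoke Lemma \ref{lem6} in place of Lemma \ref{lem5}. After a dilation and change of coordinates, as in Subsection 3.1, I reduce to proving the convergence as $a\to 0$ of the scalar functions
$$f_{IJ,KL}(a)=\int_{U\times B}T_{IJ,KL}(w',aw_n,t)\,\varphi(w,t)\,d\tau(w,t),\qquad \varphi\in\mathcal{D}(U\times B).$$
By Lemma \ref{lem4} the coefficients with $n\in I$ or $n\in J$ have mass tending to $0$ (here the contributions $\gamma_{dd^cT_j}$ simply disappear, since each $T_j$ is closed, so $\nu_{dd^cT_j}\equiv 0$ and $\gamma_{dd^cT_j}\equiv 0$); hence only the components with $n\notin I$ and $n\notin J$ survive in the limit, and for those the first estimate of Lemma \ref{lem4} already gives boundedness near $0$. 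It remains to show each such $f_{IJ,KL}$ has a limit, and I will do this by bounding $df_{IJ,KL}$.

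The decisive step is to rewrite $\partial f_{IJ,KL}/\partial a(a)=\int_{U\times B}w_n\,(\partial T_{IJ,KL}/\partial w_n)(w',aw_n,t)\,\varphi\,d\tau$ using the relation $\partial T=0$ (which follows from $dT=0$ by bidegree). Reading off the coefficient of $dw_{\{n\}\cup I}\wedge d\overline{w}_J\wedge dt_K\wedge d\overline{t}_L$ in $\partial T$ expresses $\partial T_{IJ,KL}/\partial w_n$ as a signed sum of the derivatives $\partial T_{I(k)J,KL}/\partial w_{i_k}$ (with $I(k)=I\setminus\{i_k\}\cup\{n\}$) and $\partial T_{\{n\}\cup I\,J,\,K\setminus\{e\}\,L}/\partial t_e$ for $e\in K$; the essential point is that \emph{every} coefficient appearing on the right carries the index $n$ in its first multi-index. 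Substituting $w_n\mapsto aw_n$ and integrating by parts in the variables $w_{i_k}$ and $t_e$ (which commute with the dilation since $i_k,e\neq n$) turns $\partial f_{IJ,KL}/\partial a$ into a sum of integrals of these $n$-containing coefficients against first derivatives of $\varphi$. The scaling relations for $T^a_{IJ,KL}$ convert each coefficient $T_\bullet(w',aw_n,t)$ into $a^{-1}T^a_\bullet$, and since $|w_n|<1$ on $U$, Lemma \ref{lem4} (third estimate, with $\gamma_{dd^cT_j}\equiv 0$) yields
$$\left|\frac{\partial f_{IJ,KL}}{\partial a}(a)\right|\leq \frac{C}{|a|}\left(\sum_{j=0}^m\gamma_{T_j}(|a|)\right)^{1/2}.$$
Running the identical argument with $\overline{\partial}T=0$, reading off the coefficient of $dw_I\wedge d\overline{w}_{\{n\}\cup J}\wedge dt_K\wedge d\overline{t}_L$, gives the same bound for $\partial f_{IJ,KL}/\partial\overline{a}$, so $|df_{IJ,KL}(a)|\leq u(|a|)$ with $u(r)=C\,r^{-1}\left(\sum_{j}\gamma_{T_j}(r)\right)^{1/2}$.

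It then suffices to verify the integrability hypothesis of Lemma \ref{lem6}, namely $\int_0^{r_0}u(r)\,dr<+\infty$. As there are finitely many indices $j$ and each $\gamma_{T_j}\geq 0$, this is controlled, up to a constant, by $\sum_j\int_0^{r_0}r^{-1}\sqrt{\nu_{T_j}(r)-\nu_{T_j}(r/2)}\,dr$, which is finite by hypothesis. Lemma \ref{lem6} then furnishes a limit for each $f_{IJ,KL}$, so the weak limit of $h_a^\star T$ exists, i.e.\ the directional tangent cone to $T$ exists. I expect the main obstacle to be purely combinatorial bookkeeping: checking that the closedness identities really place the index $n$ in the first (resp.\ second) multi-index of \emph{every} surviving term, so that the square-root gain of Lemma \ref{lem4} applies, and tracking the $a^{-1}$ factors against the bound $|w_n|<1$. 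Once this check is complete, the analytic estimate is immediate from Lemma \ref{lem4}, and passing through the first-order Lemma \ref{lem6} removes the logarithmic weight present in Theorem \ref{th3}, which is exactly what allows the weaker condition $\int_0^{r_0}r^{-1}\sqrt{\nu_{T_j}(r)-\nu_{T_j}(r/2)}\,dr<+\infty$ to suffice in the closed case.
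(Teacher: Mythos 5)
Your proposal is correct and follows essentially the same route as the paper's own proof: the same reduction to the scalar functions $f_{IJ,KL}$, the same use of $dT=0$ to express $\p T_{IJ,KL}/\p w_n$ through coefficients carrying the index $n$, integration by parts against $\varphi$, the third estimate of Lemma \ref{lem4} (with $\gamma_{dd^cT_j}\equiv 0$ by closedness) yielding $|df_{IJ,KL}(a)|\leq C|a|^{-1}\bigl(\sum_j\gamma_{T_j}(|a|)\bigr)^{1/2}$, and finally Lemma \ref{lem6}. The only cosmetic difference is that you split $dT=0$ explicitly into $\p T=0$ and $\overline{\p}T=0$ by bidegree, which is exactly what the paper does implicitly when reading off the coefficient of $dw_{I\cup\{n\}}\w d\overline{w}_J\w dt_K\w d\overline{t}_L$.
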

\begin{proof}
We set \\
$$\begin{array}{lcl}
    f_{IJ,KL}(a)&=&\ds\int_{U\times B}T^a_{IJ,KL}(w,t)\varphi(w,t)d\tau(w,t)\\
    &=&\ds\int_{U\times B}T_{IJ,KL}(w',aw_n,t)\varphi(w,t)d\tau(w,t).
\end{array}$$
By derivation under the integral sign one has:
$$\frac{\p f_{IJ,KL}}{\p a}(a)=\ds\int_{U\times B}w_n\frac{\p T_{IJ,KL}}{\p w_n}(w',aw_n,t)\varphi(w,t)d\tau(w,t).$$
$$\frac{\p f_{IJ,KL}}{\p \overline{a}}(a)=\ds\int_{U\times B}\overline{w}_n \frac{\p T_{IJ,KL}}{ \p \overline{w_n}}(w',aw_n,t)\varphi(w,t)d\tau(w,t).$$
The coefficient of $dw_{I\cup\{n\}}\w d\overline{w}_J\w dt_K\w d\overline{t}_L$ in $dT$ is given by
$$(-1)^q \frac{\p T_{IJ,KL}}{\p w_n}+\sum_{1\leq k\leq q} (-1)^{k-1}\frac{\p T_{I(k)J,KL}}{\p w_{i_k}}+  \sum_{1\leq e\leq {p-q}}(-1)^{k-1}\frac{\p T_{I\cup\{n\}J,K_eL}}{\p t_{i_e}}$$
With $ I(k)=(I\smallsetminus \{i_k\}\cup\{n\}),$ and $K_e=K\smallsetminus \{i_e\}$.\\
This coefficient vanishes because $dT=0$ and one has $$T_{IJ,KL}(w',aw_n,t)=a^{-1}T_{IJ,KL}^a(w,t).$$ It follows that
$$\begin{array}{l}
    \ds\frac{\p T_{IJ,KL}}{\p w_n}(w',aw_n,t)\\
    =\ds\frac1 a \sum_{1\leq k\leq q}(-1)^{k+q}\frac{\p T_{I(k)J,KL}^a}{\p w_{i_k}}+\frac{1}{a}\sum_{1\leq e\leq {p-q}}(-1)^{q+e-1}\frac{\p T_{I\cup\{n\}J,K_eL}^a}{\p t_{i_e}}
  \end{array}
$$
By substituting this relation in the integral one has $\frac{\p f_{IJ,KL}}{\p a}$ and a by integration by parts we obtain:
$$\begin{array}{lcl}
\ds\frac{\p f_{IJ,KL}}{\p a}&=&\ds\frac1 a\sum_{1\leq k\leq q}(-1)^{k+q-1}\ds \int_{U\times B} w_n T_{I(k)J,KL}^a\frac{\p \varphi} {\p w_{i_k}}d\tau(w,t)\\
&&+\ds\frac1 a\sum_{1\leq e\leq {p-q}}(-1)^{q+e-1}\int_{U\times B} w_n T_{I\cup\{n\}J,K_eL}^a\frac{\p \varphi} {\p t_{i_e}}d\tau(w,t)
\end{array}$$
Naturally we have:
$$\begin{array}{lcl}
\ds \frac{\p f_{IJ,KL}}{\p \overline{a}}&=&\ds\frac{1} {\overline{a}}\sum_{1\leq l\leq q}(-1)^{l+q-1}\ds \int_{U\times B} \overline{w}_n T_{IJ(l),KL}^a\frac{\p \varphi} {\p \overline{w}_{i_l}}d\tau(w,t)\\
&&+ \ds\frac{1} {\overline{a}}\sum_{1\leq e'\leq {p-q'}}(-1)^{q+e'-1}\int_{U\times B} \overline{w}_n T_{I\cup\{n\}J,KL_{e'}}^a\frac{\p \varphi} {\p \overline{t}_{j_{e'}}}d\tau(w,t)
\end{array}$$
The function $\varphi$ and its derivatives are bounded on $U\times B$. The lemma \ref{lem4} gives the following estimate
$$\left|\frac{\p f_{IJ,KL}}{\p a}\right|+\left|\frac{\p f_{IJ,KL}}{\p \overline{a}}\right|\leq C_1\frac{1}{|a|}\sqrt{\sum_{j=0}^{m}\gamma_{T_j}(|a|)},$$
  According to lemma \ref{lem6}, the function $ f_{IJ,KL}$ has a limit at $0$ and the directional tangent cone to $T$ exists.
\end{proof}

\begin{rem}
    The tow conditions of Theorems \ref{th3} and \ref{th4} are independent as it was shown in \cite[rem 3.7]{Bl-De-Mo}.
\end{rem}

\section*{acknowledgement}
    The authors would like to thank Professors Khalifa Dabbek and Jean-Pierre Demailly for many fruitful discussions concerning this paper.

\end{document}